\numberwithin{equation}{section}
\newtheorem{thm}[equation]{Theorem}
\newtheorem{prop}[equation]{Proposition}
\newtheorem{cor}[equation]{Corollary}
\theoremstyle{definition}
\newtheorem{remark}[equation]{Remark}
\newcounter{mycount}
\newenvironment{letlist}{\begin{list}{(\alph{mycount})}%
   {\usecounter{mycount}\labelwidth=1cm\itemsep 0pt}}{\end{list}}
\def\HH{{\mathbb H}}
\def\TT{{\mathbb T}}
\def\sF{{\mathcal E}}
\def\cE{\sF}
\def\cF{{\mathcal F}}
\def\sC{{\mathcal C}}
\def\ZRC{Z^{\text{RC}}}
\def\s{\sigma}
\def\ZI{Z^{\text{I}}}
\def\qq{\qquad}
\def\q{\quad}
\def\b{\beta}
\def\d{\delta}
\def\g{\gamma}
\def\rc{random-cluster}
\def\ZZ{{\mathbb Z}}
\def\RR{{\mathbb R}}
\def\Pr{{\mathbb P}}
\def\la{\langle}
\def\ra{\rangle}
\def\Si{\Sigma}
\def\Om{\Omega}
\def\om{\omega}
\def\oo{\infty}
\def\deg{\text{deg}}
\def\pc{p_{\mathrm c}}
\def\bc{{\b_{\mathrm c}}}
\def\bsd{\b_{\mathrm {sd}}}
\def\rad{{\mathrm{rad}}}
\def\frW{\phi_{r,2}^W}
\def\Gw{G^{\mathrm w}}
\def\ol#1{\overline{#1}}
\def\be{\begin{equation}}
\def\ee{\end{equation}}
\def\sm{\setminus}
\def\resp{respectively}
\def\ZI{Z^{\text{I}}}
\def\ZE{Z^{\text{E}}}
\def\rd{\text{\rm d}}
\def\Gd{G_\rd}
\def\Vd{V_\rd}
\def\Ed{{E_\rd}}
\def\ed{e_\rd}
\def\pd{\partial}
\def\lra{\leftrightarrow}
\def\es{\varnothing}
\def\symdif{\,\bigtriangleup\,}
\def\fpq{\phi_{p,2}}
\def\fbpq{\phi_{\br,q}}
\def\bp{\mathbf{p}}
\def\br{{\mathbf r}}
\newcommand\bbzz{{\mathbb Z}_2}
\newcommand\set[1]{\{#1\}}
\newcommand\pei{\pi_{E_1}}
\newcommand\pee{\pi_{E_2E_1}}
\newcommand\oi{\set{0,1}}
\newcommand\tF{\widetilde F}
\newcommand\Be{\operatorname{Be}}
\newcommand\ax{a${}'$}
\newcommand\bx{b${}'$}
\date{\dateline{8 October 2008}{26 February 2009}\\
   \small Mathematics Subject Classification: 05C80, 60K35}
\title{Random even graphs}
\author{Geoffrey Grimmett\\
\small Statistical Laboratory, Centre for Mathematical Sciences,\\
\small  University of Cambridge,
Wilberforce Road, Cambridge CB3 0WB, U.K.\\
\small \texttt{g.r.grimmett@statslab.cam.ac.uk}\\
\small \texttt{http://www.statslab.cam.ac.uk/$\sim$grg/}\\
\and Svante Janson\\
\small Department of Mathematics, Uppsala University,\\
\small  PO Box 480,
SE-751~06 Uppsala, Sweden\\
\small \texttt{svante@math.uu.se}\\
\small \texttt{http://www.math.uu.se/$\sim$svante/}}
\begin{document}
\maketitle

\begin{abstract}
We study a random even subgraph of a finite graph $G$ with a general
edge-weight $p\in(0,1)$. We demonstrate how it may be
obtained from a certain random-cluster measure on $G$, and
we propose a sampling algorithm
based on coupling from the past.
A random even subgraph of a planar lattice undergoes a phase transition
at the parameter-value $\frac 12 \pc$, where $\pc$ is the critical point
of the $q=2$ \rc\ model on the dual lattice. The properties of such a graph 
are discussed, and are related to Schramm--L\"owner
evolutions (SLE). 
\end{abstract}

\section{Introduction}\label{sec:intro}
Our purpose in this paper is to study a random even subgraph
of a finite graph $G=(V,E)$, and to show how to sample such a subgraph.
A subset $F$ of $E$ is called \emph{even}
if, for all $x \in V$, $x$ is incident to an even number of elements
of $F$. We call the subgraph $(V,F)$ \emph{even} if $F$ is even, and
we write 
$\sF$ for the set of all even subsets $F$ of $E$. It is
standard that every even set $F$ may be decomposed as an edge-disjoint
union of cycles. Let $p \in [0,1)$.  The \emph{random even subgraph}
of $G$ with parameter $p$ is that with law
\begin{equation}
  \label{eq:res}
\rho_p(F) = \frac 1 \ZE
p^{|F|} (1-p)^{|E\sm F|}, \qquad F \in \sF,
\end{equation} 
where
$\ZE=\ZE_G(p)$ is the appropriate normalizing constant.

We may express $\rho_p$ as follows in terms of product measure on $E$. Let
$\phi_p$ be product measure with density $p$ on the configuration space $\Om=\{0,1\}^E$.
For $\om\in\Om$ and $e \in E$, we call $e$ {\it$\om$-open\/} if $\om(e)=1$,
and {\it$\om$-closed\/} otherwise. Let $\pd\om$ denote the set of vertices $x\in V$ that are
incident to an odd number of $\om$-open edges. Then
\begin{equation}
\rho_p(F) = \frac{\phi_p(\om_F)}{\phi_p(\pd\om=\es)}, \qq F\in \sF,
\label{new1}
\end{equation}
where $\om_F$ is the edge-configuration whose open edge-set is $F$.
In other words, 
$\phi_p$ describes the  
random subgraph of $G$ obtained by randomly and independently deleting
each edge with probability $1-p$, and $\rho_p$ is the law of this random
subgraph conditioned on being even.

Random even graphs are closely related to the Ising model and the random-cluster
model on $G$, and we review these models briefly.
Let $\b\in(0,\oo)$  and
\begin{equation}
p=1-e^{- 2\beta} = \frac{2\tanh\b}{1+\tanh\b}. 
\label{8.pj}
\end{equation}
The Ising model on $G$ has configuration space
$\Sigma = \{-1,+1\}^V$, and probability measure
\begin{equation}
\pi_{\beta} (\s) = \frac 1{\ZI}
\exp\biggl\{\beta\sum_{e \in E} \s_x\s_y\biggr\},
\qq \s\in \Sigma,
\label{8.Isingdef}
\end{equation}
where $\ZI=\ZI_G(\b)$ is the partition function that makes $\pi_\b$ a probability measure,
and $e=\la x,y\ra$ denotes an edge with endpoints $x$, $y$.
A \emph{spin-cluster} of a configuration $\s\in\Si$ is a maximal connected
subgraph of $G$ each of whose vertices $v$ has the same spin-value
$\s_v$. A spin-cluster is termed a $k$ cluster if $\s_v=k$ for all $v$ 
belonging to the cluster.
An important quantity associated with the Ising model is
the `two-point correlation function' 
\begin{equation}
\tau_{\beta}(x,y) = \pi_{\beta}(\s_x=\s_y)-\tfrac 12
=\tfrac12\pi_{\b}(\s_x\s_y),
\qq x,y \in V,
\label{8.taudef}
\end{equation}
where $P(f)$ denotes the expectation of a random variable $f$ under
the probability measure $P$.

The \rc\ measure on $G$ with parameters $p\in(0,1)$ and $q=2$ is given as follows [it
may be defined for general $q>0$ but we are concerned here only with the case $q=2$]. 
Let
\begin{align}
\fpq(\om) &= \frac1{\ZRC}
\biggl\{ \prod_{e\in E} p^{\om(e)} (1-p)^{1-\om(e)}
                           \biggr\} 2^{k(\om)}\nonumber\\
&= \frac 1 \ZRC p^{|\eta(\om)|} (1-p)^{|E\sm\eta(\om)|} 2^{k(\om)},
 \qq \om \in \Om,
\label{old2.53}
\end{align}
where $k(\om)$ denotes the number of $\om$-open
components on the vertex-set $V$, $\eta(\om)=
\{e\in E: \om(e)=1\}$ is the set of open edges, and
$\ZRC=\ZRC_G(p)$ is the appropriate normalizing factor.

The relationship between the Ising and \rc\ models on $G$ is well
established, and hinges on the fact that, in the notation introduced
above, 
$$
\tau_{\beta}(x,y) = \tfrac12 \fpq(x \lra y),
$$
where $\{x\lra y\}$ is the event that $x$ and $y$ are
connected by an open path. See \cite{G-RC} for an account of the
\rc\ model. There is a relationship between the Ising model
and the random even graph also, known misleadingly as the
`high-temperature expansion'. This may be stated as follows. For completeness,
we include a proof of this standard fact at
the end of the section, see also \cite{Bax}.

\begin{thm} \label{Tising}
Let $2p=1-e^{-2\b}$ where $p\in(0,\frac12)$, 
and consider the Ising model with
inverse temperature $\b$. 
Then
$$
\pi_{\b,2}(\s_x\s_y) 
=  \frac{\phi_p(\pd \om = \{x,y\})}{\phi_p(\pd \om = \es)},
\qquad x,y\in V, \ x\ne y.
$$
\end{thm}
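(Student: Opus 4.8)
The plan is to perform the classical high-temperature expansion directly, expanding both the partition function $\ZI$ and the unnormalized correlation $\sum_{\s\in\Si}\s_x\s_y\exp\{\b\sum_e\s_u\s_v\}$ as sums over edge-subsets of $E$, and then to recognize the two resulting generating functions as the $\phi_p$-probabilities appearing on the right-hand side. The only genuine parameter bookkeeping at the end is the identity $\tanh\b=p/(1-p)$.

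First I would treat the denominator. For each edge $e=\la u,v\ra$, since $\s_u\s_v\in\{-1,+1\}$ one has the exact identity $e^{\b\s_u\s_v}=\cosh\b\,(1+\s_u\s_v\tanh\b)$. Substituting this into \eqref{8.Isingdef} and expanding the product $\prod_e(1+\s_u\s_v\tanh\b)$ over all $2^{|E|}$ choices gives
\[
\ZI=(\cosh\b)^{|E|}\sum_{F\subseteq E}(\tanh\b)^{|F|}\sum_{\s\in\Si}\prod_{\la u,v\ra\in F}\s_u\s_v .
\]
The inner product equals $\prod_{w\in V}\s_w^{d_F(w)}$, where $d_F(w)$ is the degree of $w$ in $(V,F)$; summing over $\s\in\Si$ factorizes over vertices, and $\sum_{\s_w\in\{-1,+1\}}\s_w^{d_F(w)}$ equals $2$ when $d_F(w)$ is even and $0$ when it is odd. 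Hence only even sets $F$ survive, and $\ZI=(\cosh\b)^{|E|}2^{|V|}\sum_{F\in\sF}(\tanh\b)^{|F|}$.

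Applying the same expansion to $\sum_{\s}\s_x\s_y\,e^{\b\sum_e\s_u\s_v}$ inserts one extra factor $\s_x\s_y$, so that the inner spin-sum now selects precisely those $F$ with $d_F(w)$ even for $w\ne x,y$ and odd for $w\in\{x,y\}$, i.e.\ those $F$ with $\pd\om_F=\{x,y\}$ (this uses $x\ne y$). Forming the ratio, the prefactors $(\cosh\b)^{|E|}2^{|V|}$ cancel, leaving
\[
\pi_{\b,2}(\s_x\s_y)=\frac{\sum_{F:\,\pd\om_F=\{x,y\}}(\tanh\b)^{|F|}}{\sum_{F\in\sF}(\tanh\b)^{|F|}} .
\]

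It then remains to match this against $\phi_p$. Writing $\phi_p(\pd\om=\{x,y\})$ and $\phi_p(\pd\om=\es)$ as sums of $p^{|F|}(1-p)^{|E\sm F|}$ over the corresponding $F$, and cancelling the common factor $(1-p)^{|E|}$ from numerator and denominator, turns their ratio into exactly the generating functions above but evaluated at $p/(1-p)$ in place of $\tanh\b$. So one only needs $\tanh\b=p/(1-p)$: from the hypothesis $2p=1-e^{-2\b}$ one gets $e^{2\b}=1/(1-2p)$, whence $\tanh\b=(e^{2\b}-1)/(e^{2\b}+1)=p/(1-p)$, which closes the argument. The computation is essentially routine once the $\cosh$/$\tanh$ identity is in hand; the one substantive step—the part that converts an algebraic expansion into a weighted sum over even, respectively odd-at-$\{x,y\}$, subgraphs—is the parity observation that the spin-sum vanishes unless the degree sequence of $F$ has the prescribed parities.
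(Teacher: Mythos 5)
Your proof is correct and is essentially the same argument as the paper's: both are the classical van der Waerden high-temperature expansion, in which the Boltzmann factor is expanded edge-by-edge into a sum over edge-subsets and the spin-sum kills all terms except those with the prescribed degree parities. The paper merely packages this as two marginalizations of a joint weight $Z_p(\s,\om)$ on $\Si\times\Om$ (absorbing the normalization into $p$ and $1-p$ from the start, via $1-p+p\s_v\s_w=e^{\b(\s_v\s_w-1)}$), whereas you expand in powers of $\tanh\b$ and match generating functions at the end using $\tanh\b=p/(1-p)$ --- the same identity in different bookkeeping.
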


A corresponding conclusion is valid for the product of $\s_{x_i}$ over any
even family of distinct $x_i\in V$.

This note is laid out in the following way. In Section \ref{sec:ures} we define a random even
subgraph of a finite or infinite graph, and we explain how to
sample a \emph{uniform} even subgraph. In Section \ref{sec:rcre} we
explain how to sample 
a non-uniform random even graph, starting with a sample from a
random-cluster measure. 
An algorithm for exact sampling is presented in Section \ref{sec:sampling} based
on the method of coupling from the past.
The structure of random even subgraphs of the square and hexagonal lattices
is summarized in Section \ref{sec:revenL}. 

In a second paper \cite{even2}, we study the asymptotic properties of
a random even subgraph of the complete graph $K_n$. Whereas the
special relationship with 
the \rc\ and Ising models is the main feature of the current work, the
analysis of \cite{even2} is more analytic, and extends to random
graphs whose vertex degrees
are constrained to lie in any given subsequence of the non-negative integers.

\begin{remark}\label{Rpe}
The definition \eqref{eq:res} may be generalized by replacing the 
  single parameter $p$ by a family $\bp = (p_e: e\in E)$, just as
  sometimes is done for the \rc\ measure \eqref{old2.53}, see for example
  \cite{Sok05}; we let
\begin{equation}\label{rpe}
	\rho_{\bp}(F)=\frac1 Z \prod_{e\in F} p_e\prod_{e\notin F} (1-p_e).
  \end{equation}
For simplicity we will mostly consider the case of a single $p$.
\end{remark}

\begin{proof}[Proof of Theorem \ref{Tising}]
For $\s\in\Si$, $\om\in\Om$, let
\begin{align}
Z_p(\s,\om) 
&= \prod_{e=\la v,w\ra} \Bigl\{(1-p)\d_{\om(e),0} + p\s_v\s_w\d_{\om(e),1}\Bigr\}\nonumber\\
&= p^{|\eta(\om)|}(1-p)^{|E\sm\eta(\om)|} \prod_{v\in V} \s_v^{\deg(v,\om)},
\label{eq:0}
\end{align}
where $\deg(v,\om)$ is the degree of $v$ in the `open' graph
$(V,\eta(\om))$. Then 
\begin{align}
\sum_{\om\in\Om} Z_p(\s,\om) 
&=
\prod_{e=\la v,w\ra} (1-p+p\s_v\s_w)
=
\prod_{e=\la v,w\ra} e^{\b(\s_v\s_w-1)}\nonumber\\
&= e^{-\b|E|} \exp\left(\b\sum_{e=\la v,w\ra} \s_v\s_w\right),
\qquad \s\in\Si.
\label{eq:1}
\end{align}
Similarly, 
\begin{align}
\sum_{\s\in\Si} Z_p(\s,\om) &=
2^{|V|} p^{|\eta(\om)|}(1-p)^{|E\sm\eta(\om)|} 1_{\{{\pd\om =\es}\}},
\qquad \om\in\Om,
\label{eq:2a}
\intertext{and}
\sum_{\s\in\Si} \s_x\s_yZ_p(\s,\om) &=
2^{|V|} p^{|\eta(\om)|}(1-p)^{|E\sm\eta(\om)|} 1_{\{{\pd\om =\{x,y\}}\}},
\qquad \om\in\Om.
\label{eq:2b}
\end{align}

By \eqref{eq:1},
$$
\pi_{\b,2}(\s_x\s_y) = 
\frac {\sum_{\s,\om}\s_x\s_y Z_p(\s,\om)}
{\sum_{\s,\om} Z_p(\s,\om)},
$$
and the claim follows by \eqref{eq:2a}--\eqref{eq:2b}.
\end{proof}

\section{Uniform random even subgraphs}\label{sec:ures}
\subsection{Finite graphs}\label{ssec:fin}

In the case $p=\frac12$ in \eqref{eq:res}, every even subgraph has
the same probability, so $\rho_{\frac12}$ describes a uniform random even
subgraph of $G$. Such a random subgraph can be obtained as follows.

We identify the family of all spanning subgraphs of $G=(V,E)$ 
with the family $2^E$ of all subsets of $E$. This family
can further be identified
with $\set{0,1}^E=\bbzz^E$, and is thus a vector space over $\bbzz$;
the addition is componentwise addition modulo 2 in $\set{0,1}^E$,
which translates into taking the symmetric difference of edge-sets:
$F_1+F_2=F_1\symdif F_2$ for $F_1,F_2\subseteq E$.

The family of even subgraphs of $G$ forms a subspace $\sF$ of this
vector space $\set{0,1}^E$, since $F_1+F_2=F_1\symdif F_2$ is even if
$F_1$ and $F_2$ are even. (In fact, $\sF$ is the cycle space $Z_1$ in
the $\bbzz$-homology of $G$ as a simplicial complex.)
In particular, the number of even subgraphs of $G$ equals $2^{c(G)}$
where $c(G)=\dim(\sF)$;
$c(G)$ is thus the number of independent cycles in $G$, and, as is
well known,
\begin{equation}\label{c}
c(G)=|E|-|V|+k(G).  
\end{equation}

\begin{prop}\label{P1}
  Let $C_1,\dots,C_c$ be 
a maximal set of independent cycles in $G$.
Let $\xi_1,\dots,\xi_c$ be independent $\Be(\frac12)$ random variables
(i.e., the results of fair coin tosses).
Then $\sum_{i}\xi_i C_i$ is a uniform random even subgraph of $G$.
\end{prop}

\begin{proof}
  $C_1,\dots,C_c$ is a basis of the vector space $\sF$ over $\bbzz$.
\end{proof}

One standard way of choosing 
  $C_1,\dots,C_c$ is exploited in the next proposition. Another, for
  planar graphs, is given by the boundaries of the finite faces;
this will be used in Section \ref{sec:revenL}. In the following proposition, we
use the term \emph{spanning subforest} 
of $G$ to mean a maximal forest of $G$, that is, the union of
a spanning tree from each component of $G$.

  \begin{prop}\label{P2}
Let $(V,F)$ be a spanning subforest of $G$.
Each subset $X$ of $E\setminus F$ can be completed by a unique
$Y\subseteq F$ to an even edge-set $E_X = X\cup Y\in\sF$.
Choosing a uniform random subset $X\subseteq E\setminus F$ thus gives
a uniform random even subgraph $E_X$ of $G$.
  \end{prop}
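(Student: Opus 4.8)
The plan is to realise the completion operation $X\mapsto E_X$ as the inverse of an explicit $\bbzz$-linear isomorphism, so that the existence/uniqueness statement and the uniformity statement both drop out at once. Throughout I work inside the vector space $\bbzz^E$ set up above, in which $\sF$ is the subspace of even subgraphs with $\dim(\sF)=c(G)$ given by \eqref{c}.

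First I would pin down the dimensions. Since a spanning subforest $(V,F)$ is the edge-disjoint union of a spanning tree from each of the $k(G)$ components of $G$, it has $|F|=|V|-k(G)$ edges, whence $|E\setminus F|=|E|-|V|+k(G)=c(G)$ by \eqref{c}. Thus $\bbzz^{E\setminus F}$ and $\sF$ are both $\bbzz$-vector spaces of dimension $c(G)$. The heart of the matter is then the restriction map $R\colon \sF\to\bbzz^{E\setminus F}$ defined by $R(H)=H\cap(E\setminus F)$, which is manifestly $\bbzz$-linear. Its kernel consists of those $H\in\sF$ with $H\subseteq F$; but any nonempty even set contains a cycle (being an edge-disjoint union of cycles, as recalled after \eqref{eq:res}), while the forest $F$ is acyclic, so the only such $H$ is $\es$. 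Hence $R$ is injective, and since the two spaces have equal dimension $c(G)$, it is an isomorphism.

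Consequently, for each $X\subseteq E\setminus F$ there is exactly one $H\in\sF$ with $R(H)=X$; setting $Y=H\cap F\subseteq F$, this $H=X\cup Y=E_X$ is precisely the unique even completion of $X$ by a subset of $F$, which is the first assertion. The map $X\mapsto E_X$ is then the inverse isomorphism $R^{-1}$, in particular a bijection from $2^{E\setminus F}$ onto $\sF$, so a uniform $X$ is carried to a uniform element of $\sF$, i.e.\ a uniform random even subgraph.

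I expect the only genuine content to be the kernel computation — that an even subgraph contained in a forest must be empty — together with the dimension bookkeeping via \eqref{c}; the rest is formal linear algebra. A more hands-on variant, if a concrete formula is wanted, is to build $E_X=\sum_{e\in X}C_e$ directly, where $C_e$ is the fundamental cycle consisting of $e$ together with the unique $F$-path joining its endpoints (these endpoints lie in a common component of $G$, hence are $F$-connected); one checks that each $C_e$ meets $E\setminus F$ exactly in $\{e\}$, so that $E_X\cap(E\setminus F)=X$ and the $C_e$ form a basis of $\sF$ dual to the coordinates on $E\setminus F$. This exhibits $R^{-1}$ explicitly, but the decisive acyclicity input is identical.
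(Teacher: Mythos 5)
Your proof is correct, and your \emph{primary} argument takes a genuinely different route from the paper's, whose entire proof is the remark you defer to your final paragraph: each edge $e_i\in E\setminus F$ closes up with $F$-edges into a unique fundamental cycle $C_i$, these cycles form a basis of $\sF$, and the conclusion follows from Proposition \ref{P1}. Your main argument instead avoids exhibiting any basis: you show the restriction map $R\colon\sF\to\bbzz^{E\setminus F}$, $R(H)=H\cap(E\setminus F)$, is a linear isomorphism, getting injectivity from the fact that a nonempty even set contains a cycle while $F$ is acyclic, and surjectivity from the dimension count $|E\setminus F|=|E|-|V|+k(G)=c(G)=\dim\sF$ via \eqref{c}. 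What your route buys: existence and uniqueness of the completion $Y$ drop out simultaneously, you never need to check that the fundamental cycles are independent or spanning (a point the paper dismisses as ``easy to see, and well known''), and you do not need Proposition \ref{P1} at all. What the paper's route buys: it is constructive, yielding the explicit formula $E_X=\sum_{e\in X}C_e$ that your closing paragraph rederives, and it reuses the already-established Proposition \ref{P1} rather than repeating linear algebra. The decisive input --- an even subgraph contained in a forest must be empty --- is the same in both arguments.
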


  \begin{proof}
It is easy to see, and well known, that each edge $e_i\in E\setminus
F$ can be completed by edges in $F$ to a unique cycle $C_i$; these
cycles form a basis of $\sF$ and the result follows by Proposition
\ref{P1}.
(It is also easy to give a direct proof.)	
  \end{proof}

\subsection{Infinite graphs}\label{ssec:infin}

Here, and only here, we consider even subgraphs of \emph{infinite} graphs.
Let $G=(V,E)$ be a locally finite, infinite graph.
We call a set $\cF\subset 2^E$ 
\emph{finitary} if each edge in $E$ belongs to only a
finite number of elements in $\cF$. If $G$ is countable (for example, if $G$ is locally finite and
connected), then any finitary $\cF$ is necessarily countable.
If $\cF\subset 2^E$ is finitary, then the (generally
infinite) sum $\sum_{x\in\cF}x$ is a well-defined
element of $2^E$, by considering one coordinate (edge) at
a time; if, for simplicity,
$\cF=\set{x_i: i\in I}$, then $\sum_{i\in I} x_i$ includes a given edge
$e$ if and only if $e$ lies in an odd number of the $x_i$.

We can define the even subspace $\sF$ of $2^E$ as before. (Note that we
need $G$ to be locally finite in order to do so.) 
If $\cF$ is a finitary subset of $\cE$,
then $\sum_{x\in \cF}x\in\cE$.

A \emph{finitary basis} of $\cE$ is a finitary
subset $\cF\subset\cE$ such that every element of
$\cE$ is the sum of a unique subset
$\cF'\subseteq\cF$; in other words, if the linear (over
$\ZZ_2$) map $2^{\cF}\to\cE$ defined by
summation is an isomorphism. 
(A finitary basis is not a vector-space basis in the usual algebraic
sense since the summations are generally infinite.) 

We define an \emph{infinite cycle} in $G$ to be a subgraph
isomorphic to $\ZZ$, i.e., a doubly infinite path. (It is
natural to regard such a path as a cycle passing through infinity.)
Note that, if $F$ is an even subgraph of $G$, then every
edge $e \in F$ belongs to some finite or infinite cycle in
$F$: if no finite cycle contains $e$, removal of
$e$ would disconnect the component of $F$ that contains
$e$ into two parts; since $F$ is even both parts have to
be infinite, so there exist infinite rays from the endpoints of
$e$, which together with $e$ form an infinite cycle.

\begin{prop}
The space $\cE$ has a finitary basis. We may choose such
a finitary basis containing only finite or infinite cycles.
\end{prop}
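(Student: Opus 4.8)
The plan is to construct a finitary basis of $\cE$ explicitly by mimicking the finite-graph construction of Proposition \ref{P2}, but adapted to handle infinite components and the infinite cycles that arise there. Let me think about what could go wrong and how to build the basis.

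First, I would choose a spanning subforest $(V,F)$ of $G$, i.e. a maximal forest, which exists by Zorn's lemma. For each edge $e \in E \setminus F$, adding $e$ to $F$ creates a unique finite cycle $C_e$ (since the endpoints of $e$ lie in the same tree of $F$ and are joined by a unique finite path in that tree). I would take $\cF_0 = \{C_e : e \in E \setminus F\}$ as a first attempt. The key point is that each $C_e$ contains the edge $e \notin F$, and $e$ lies in no other $C_{e'}$; this gives a triangularity that should force summation to be injective, exactly as in the finite case.

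The main obstacle, and the reason the statement is nontrivial, is \emph{finitariness} together with \emph{completeness}: I must check both that each edge lies in only finitely many basis elements, and that every even set (including those supported on infinitely many edges, and those whose cycle decomposition involves infinite cycles) is uniquely expressible. For finitariness I would argue that a fixed edge $f \in F$ lies in $C_e$ only if the tree-path closing $e$ passes through $f$; for a locally finite tree this need not be automatically finite, so here I expect to need care — I would choose $F$ and an ordering/rooting of each tree so that each tree-edge is used by only finitely many fundamental cycles, or alternatively restrict attention to cycles within each component and handle each tree separately. For completeness, given an even set $W \in \cE$, I would decompose it into finite and infinite cycles using the observation already established in the excerpt (every edge of an even set lies in a finite or infinite cycle). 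The finite part is captured by $\sum_{e \in W \setminus F} C_e$, and I would verify that $W$ minus this sum is an even subset of $F$; but an even subset of a \emph{forest} can only be empty if we are dealing with finite cycles. The genuinely new phenomenon is the infinite cycles: an even subset of a forest $F$ can be nonempty precisely when it consists of doubly-infinite paths in $F$, so I must augment $\cF_0$ with a suitable finitary family of infinite cycles that spans these.

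To handle the infinite part cleanly, I would argue as follows. After subtracting the fundamental finite cycles, the residual even set $W'$ is contained in $F$ and is itself even, hence (having no finite cycles, as $F$ is a forest) decomposes into infinite cycles lying in $F$. So it suffices to produce a finitary basis for the even subspace of the forest $F$ alone, then take the union with $\cF_0$. Within a single infinite tree $T$, the even subsets are exactly the edge-disjoint unions of doubly-infinite paths, and I would select, for each suitable family of ``ends'' of $T$, doubly-infinite paths forming a finitary spanning family — choosing them greedily along a fixed enumeration of the ends or vertices so that each edge is covered finitely often. I expect the hardest verification to be confirming that this augmented family is simultaneously finitary, spanning, and independent over $\ZZ_2$; the topological structure of the ends of an infinite, locally finite tree is where the real work lies, and I would lean on local finiteness at every step to keep all relevant sums well-defined and all coverings finite.
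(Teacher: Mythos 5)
Your outline correctly identifies where the difficulty lies, but it defers rather than solves the two steps that constitute the actual content of the proposition, and the first of your deferred steps cannot be fixed in the way you suggest. A tree edge $f\in F$ lies in the fundamental cycle $C_e$ of a chord $e$ exactly when $e$ joins the two components of $F-f$; hence your family $\{C_e: e\in E\sm F\}$ is finitary if and only if \emph{every fundamental cut of the forest is a finite subset of $E$}. This is a property of the forest alone: no rooting or ordering of a fixed forest can repair it, so the real task is to prove that a forest with all fundamental cuts finite exists in every locally finite graph, and you give no such construction. The claim is substantive: since $\ZZ^2$ admits no finite edge-cut separating two infinite vertex sets, a spanning tree of $\ZZ^2$ with this property must have one side of every tree edge finite, i.e.\ the tree must be one-ended. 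Such trees do exist for $\ZZ^2$ (e.g.\ a spiral), but producing one for an arbitrary locally finite graph is a genuine structural problem, not a bookkeeping issue; moreover the well-definedness of your sum $\sum_{e\in W\sm F}C_e$ already presupposes exactly this unproven finitariness, so the completeness argument cannot even begin without it.

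The second gap is the forest part itself. Even granting the reduction, you must exhibit a finitary, independent, spanning family of doubly infinite paths for the even space of a forest, and ``greedily along a fixed enumeration of the ends'' is not a construction: a locally finite tree can have uncountably many ends (for the binary tree the end space is a Cantor set), so there is no enumeration of ends to follow; and for a tree your chord reduction buys nothing, since there are no chords, so this subproblem is the full proposition again. The paper avoids both difficulties with a single greedy induction over an enumeration of the \emph{edges}: having chosen pivot edges $h_1,\dots,h_n$ and cycles $C_1,\dots,C_n$, it takes $h_{n+1}$ to be the earliest edge lying in some even subgraph avoiding $h_1,\dots,h_n$, and chooses $C_{n+1}$ to be a (finite or infinite) cycle through $h_{n+1}$ \emph{inside that residual even subgraph} --- the same observation you quote, but applied to residual even subgraphs rather than to a fixed target. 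The resulting triangularity ($h_n\in C_n$ while $h_j\notin C_n$ for $j<n$) yields finitariness, uniqueness of representation, and spanning simultaneously, and treats finite and infinite cycles on the same footing. If you want to salvage your outline, both of your deferred steps should be replaced by this kind of greedy choice; but at that point the spanning-forest reduction is superfluous.
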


\begin{proof}
  It suffices to consider the case when $G$ is connected, and hence countable.
We construct a finitary basis by  induction.
Order the edges in a fixed but arbitrary way as $e_1 < e_2 <\cdots$. 
Let $h_1$ be the first edge that belongs to
an even subgraph of $G$, and
  choose a (finite or infinite) cycle $C_1$ containing $h_1$.
Having chosen $h_1$, $C_1$, \dots, $h_n$,
  $C_n$, consider the subspace $\cE_n$ of all even subgraphs of $G$ 
containing none of $h_1,\dots,h_n$. 
If $\cE_n=\{\es\}$, we stop, and write $\cF=\{C_1,C_2,\dots,C_n\}$. 
Otherwise, let $h_{n+1}$ be the earliest edge
belonging to some non-trivial even subgraph $F_n \in\cE_n$, and choose
a cycle $C_{n+1}\subset F_n$ containing $h_{n+1}$. Either this process stops
after finitely many steps, with the cycle set $\cF$, or it continues forever,
and we write $\cF$ for the countable set of cycles thus obtained.
Finally, write $H=\{h_1,h_2,\dots\}$. We shall assume that $H \ne\es$,
since the proposition is trivial otherwise.

We claim that $\cF$ is a finitary basis for $\cE$. Note that 
\be
h_{n}\in C_{n},\qq h_j\notin C_{n}\text{  for }j < n.
\label{gen7}
\ee
Let $e \in E$, say $e=e_r$.
If $e_r =h_s$ for some $s$, then  $e_r$ lies in only finitely many
of the $C_j$. If $e_r\in E\sm H$ and $h_s < e_r < h_{s+1}$ for
some $s$ (or $h_s < e_r$ for all $s$), then $e_r$ lies in no member of $\cE_s$, so that it
lies in only finitely many of the $C_j$. If $e_r < h_1$, then $e_r$ lies
in no $C_j$.  In conclusion, $\cF$ is
finitary.

Next we show that no element $F \in \cE$ has more than one representation
in terms of $\cF$. Suppose, on the contrary, that $\sum_i \xi_i C_i =
\sum_i \psi_i C_i$. Then the sum of these two summations is the
empty set. By \eqref{gen7}, there is no
non-trivial linear combination of the $C_i$ that equals the empty set,
and therefore $\xi_i=\psi_i$ for every $i$.

Finally, we show that $\cF$ spans $\cE$, which is to say that
the map $2^\cF \to \cE$ defined by summation has range $\cE$. 
Let $\ol \cF$ be the subspace of
$\cE$ spanned by $\cF$.
For $H'\subseteq H$, there is a unique element $F' \in \ol\cF$ such that 
$F' \cap H = H'$; $F'$ is obtained by an inductive construction that considers
the $C_j$ in order of increasing $j$, and includes a given $C_j$ if: either $h_j\in H'$
and $h_j$ lies in an even number of the $C_i$ already included, or
$h_j \notin H'$ and $h_j$ lies in an odd number of the
$C_i$ already included. 

Let $F \in \cE$. 
By the above, there is a unique element $F'\in \ol\cF$ 
satisfying $F' \cap H = F \cap H$. Thus, $F + F'$ is
an even subgraph having empty intersection with $H$. 
Let $e_r$ be the earliest edge in $F+F'$, if such an edge exists.
Since $e_r \in F + F'$, there exists $s$ with $h_s<e_r$. With $s$ chosen
to be maximal with this property,
we have that $e_r$ lies in no
even subgraph of $\cE_s$, in contradiction of the properties of $F+F'$. 
Therefore, no such $e_r$ exists, so that $F+F'=\es$,
and $F =F'\in \ol\cF$ as required.
\end{proof}

Given any finitary basis $\cF=\set{C_1,C_2,\dots}$ of $\cE$, we may sample a
uniform random even subgraph of $G$ by extending
the recipe of Proposition 2.2 to infinite sums: we let
$\xi_1,\xi_2,\dots$ be independent
$\Be(\frac12)$ random variables and take
$\sum_i\xi_iC_i$. In other words, we take the sum of a
random subset of the finitary basis $\cF$ obtained by
selecting elements independently with probability $\frac12$ each.
Denote by $\rho$ the ensuing probability measure on $\cE$.

It turns out that $\rho$ is specified in a natural
way by its projections.
Let $E_1$ be a finite subset of $E$. The natural projection 
$\pei:\oi^{E}\to\oi^{E_1}$ given by $\pei(\om)=(\om_e)_{e\in E_1}$
maps $\sF$ onto a subspace $\sF_{E_1}=\pei(\sF)$ of $\oi^{E_1}$.

\begin{thm}\label{Tuniforminfinite}
  Let $G$ be a locally finite, 
infinite graph. The measure $\rho$ given above
is the unique probability measure on $\Omega=\oi^E$ such that, for
every finite set $E_1\subset E$ with $\sF_{E_1}\ne\es$,  
$(\om_e)_{e\in E_1}$ is uniformly distributed on $\sF_{E_1}$, i.e.,
\begin{equation}
  \label{eq:ax}
\rho(\pei^{-1}(A))=|A\cap \sF_{E_1}|/|\sF_{E_1}|,
\qquad
A\subseteq\oi^{E_1}. 
\end{equation}
\end{thm}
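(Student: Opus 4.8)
The plan is to prove two things: that the measure $\rho$ constructed from the finitary basis actually satisfies the projection property \eqref{eq:ax}, and that this property determines a measure on $\Omega$ uniquely. I would treat uniqueness first, since it is essentially soft. The sets $\pei^{-1}(A)$, as $E_1$ ranges over finite subsets of $E$ and $A$ over subsets of $\oi^{E_1}$, form a $\pi$-system generating the product $\sigma$-field on $\Omega=\oi^E$. Since \eqref{eq:ax} prescribes the $\rho$-measure of every such cylinder set, two measures agreeing there must agree everywhere, by the standard $\pi$-$\lambda$ (Dynkin) uniqueness theorem. One small point to check is that \eqref{eq:ax} is internally consistent, i.e. that it does define a finitely additive set function on cylinders at all; but this is automatic once we exhibit \emph{some} measure satisfying it, which is exactly what the existence half provides.

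The substance is therefore to verify that $\rho$, the law of $\sum_i\xi_i C_i$ with the $\xi_i$ independent fair coins, has the stated uniform projections. Fix a finite $E_1\subset E$ with $\sF_{E_1}\ne\es$. I would first confirm that $\sum_i\xi_iC_i$ lands in $\cE$ almost surely and that its restriction to $E_1$ is a well-defined random element of $\oi^{E_1}$: this is where the finitary property of $\cF$ is essential, because each edge of the finite set $E_1$ lies in only finitely many of the $C_i$, so $(\om_e)_{e\in E_1}$ depends on only finitely many of the coin tosses $\xi_i$. Next I want to identify the distribution of this finite random vector. Because $\cF$ is a finitary basis, the summation map $2^\cF\to\cE$ is a $\ZZ_2$-linear isomorphism, and composing with the projection $\pei$ gives a $\ZZ_2$-linear map $T:\oi^{\cF}\to\oi^{E_1}$ whose image is precisely $\pei(\cE)=\sF_{E_1}$ (surjectivity onto $\sF_{E_1}$ follows from surjectivity of summation onto $\cE$).

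The key algebraic fact is then that the pushforward under a surjective linear map of the uniform (equivalently, independent fair-coin) measure on the domain is uniform on the image. Concretely, $(\xi_i)$ induces the uniform distribution on the (finite-dimensional) quotient acting through $T$, so for any target value $t\in\sF_{E_1}$ the fibre $T^{-1}(t)$ is a coset of $\ker T$ and hence all fibres have equal measure; therefore $\Pr(\pei(\om)=t)=1/|\sF_{E_1}|$ for each $t\in\sF_{E_1}$, and $0$ otherwise. This is exactly \eqref{eq:ax}. The one technical wrinkle is that $T$ has an infinite-dimensional domain, so I would phrase this using only the finitely many coordinates $\xi_i$ on which $\pei(\om)$ genuinely depends: restricting to those, $T$ becomes an honest linear map between finite-dimensional $\ZZ_2$-spaces, and the coset-counting argument applies verbatim.

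I expect the main obstacle to be bookkeeping rather than conceptual: making rigorous the reduction from the infinite basis to a finite-dimensional linear-algebra statement, specifically identifying the finite subset of indices $i$ for which $C_i\cap E_1\ne\es$ and checking that $\pei(\om)$ is measurable with respect to those $\xi_i$ alone. Once that finite reduction is in place, both the surjectivity onto $\sF_{E_1}$ and the equal-fibre (uniform-pushforward) step are routine finite linear algebra over $\ZZ_2$, and the uniqueness half is a one-line appeal to the $\pi$-system generating the cylinder $\sigma$-field.
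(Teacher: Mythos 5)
Your proposal is correct, and its core — the verification that $\rho$ has uniform projections — is the same as the paper's: use the finitary property to reduce $\pei\circ(\text{summation})$ to a linear map of a finite-dimensional $\bbzz$-space onto $\sF_{E_1}$, and observe that a surjective linear map pushes the uniform (fair-coin) measure forward to the uniform measure on its image, by counting cosets of the kernel. Where you differ is in the uniqueness/existence bookkeeping. The paper first constructs, via the Kolmogorov extension theorem, the unique measure with the prescribed marginals; this forces it to check consistency of the specification as $E_1$ varies (done by noting that $\pee$ maps $\sF_{E_2}$ linearly onto $\sF_{E_1}$, so uniform goes to uniform), and also to reduce to connected, hence countable, graphs so that the extension theorem applies cleanly. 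You instead get uniqueness directly from the Dynkin $\pi$--$\lambda$ theorem, since the cylinder events $\pei^{-1}(A)$ form a generating $\pi$-system whose measures are pinned down by \eqref{eq:ax}, and you get existence for free because $\rho$ is already a bona fide measure (the coordinatewise sum $\sum_i\xi_iC_i$ is measurable, each edge-coordinate depending on finitely many coins). This buys you two small simplifications: no consistency lemma and no reduction to connected/countable $G$; the paper's route, in exchange, exhibits the abstract projective-limit construction explicitly and records the useful fact that $\pee:\sF_{E_2}\to\sF_{E_1}$ is a linear surjection. One incidental remark: the hypothesis $\sF_{E_1}\ne\es$ is vacuous, since $\es\in\sF$ forces the zero configuration to lie in every $\sF_{E_1}$, so your $\pi$-system really does consist of all finite-dimensional cylinders.
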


\begin{proof}
We may assume that $G$ is connected since, if not,
any $\rho$ satisfying \eqref{eq:ax} is a product measure over
the different components of $G$.
Note that every connected, locally finite graph is countable.

We show next that there is a unique probability measure
satisfying \eqref{eq:ax}.
This equation specifies its value on any cylinder event. By the
Kolmogorov extension theorem, it suffices to show that this
specification is consistent as $E_1$ varies, which amounts to showing
that if $E_1\subseteq E_2\subset E$ with $E_1,E_2$ finite, then the
projection $\pee:\oi^{E_2}\to\oi^{E_1}$ maps the uniform
distribution on $\sF_{E_2}$ to the uniform distribution on $\sF_{E_1}$. 
This is an immediate consequence of the
  fact that $\pee$ is a linear map of 
$\sF_{E_2}$ onto $\sF_{E_1}$. 

Finally we show that $\rho$ satisfies \eqref{eq:ax}.
Let $E_1 \subset E$ be finite. Since $\cF$ is finitary, its subset $\cF_1$,
containing cycles that intersect $E_1$, is finite. Since $\rho$ is
obtained from uniform product measure on $\cF$, its projection onto
$E_1$ is uniform (on its range) also.
\end{proof}

Diestel \cite[Chap.\ 8]{Dies} discusses related results for the space of subgraphs
spanned by the finite cycles, and relates them to closed curves in the
Freudenthal 
compactification of $G$ obtained by adding ends to the graph. It
is tempting to guess that there may be similar results for even
subgraphs and the one-point compactification of $G$ (where all
ends are identified to a single point at infinity). We do not
explore this here, except to note that the finite and infinite
cycles are exactly those subsets of the one-point compactification that
are homeomorphic to a circle. 

\section{Random even subgraphs via coupling}\label{sec:rcre}
We return 
to the random even subgraph with parameter $p\in[0,1)$ defined by
\eqref{eq:res} for a finite graph $G=(V,E)$.
We show next how to couple the $q=2$ \rc\ model and the random even subgraph of
$G$. Let $p\in[0,\frac12]$, and let $\om$ be a realization
of the \rc\ model on $G$ with parameters $2p$ and $q=2$.  
Let $R=(V,\g)$ be 
a uniform random even subgraph of $(V,\eta(\om))$.

\begin{thm}\label{thm:rceven}
Let $p\in [0,\frac12]$. The graph $R=(V,\g)$ is a random even
subgraph of $G$ with parameter $p$.
\end{thm}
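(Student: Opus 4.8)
**

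The plan is to compute the law of the coupled graph $R=(V,\gamma)$ directly, by summing over the random-cluster configurations $\omega$ that can produce a given even subgraph $\gamma$. Fix an even edge-set $F \in \sF$. I want to show that $\Pr(\gamma = F) = \rho_p(F)$, where the probability is over the two-stage experiment: first sample $\omega$ from $\phi_{2p,2}$ (the $q=2$ random-cluster measure with edge-parameter $2p$), then sample a uniform even subgraph of $(V,\eta(\omega))$.

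First I would write down the conditional law. Given $\omega$, a uniform even subgraph of $(V,\eta(\omega))$ is a uniformly chosen element of the cycle space of $(V,\eta(\omega))$, which by \eqref{c} has $2^{c(\omega)}$ elements, where $c(\omega) = |\eta(\omega)| - |V| + k(\omega)$. Crucially, $\gamma = F$ is possible only when $F \subseteq \eta(\omega)$, and in that case the conditional probability is exactly $2^{-c(\omega)}$, independent of which $F$ we chose (as long as it is a subgraph of the open set). So
\begin{equation}
\Pr(\gamma = F) = \sum_{\omega:\, F \subseteq \eta(\omega)} \phi_{2p,2}(\omega)\, 2^{-c(\omega)}.
\end{equation}
Next I would substitute the random-cluster weight from \eqref{old2.53} with parameter $2p$: the factor $2^{k(\omega)}$ in $\phi_{2p,2}$ combines with $2^{-c(\omega)} = 2^{-|\eta(\omega)|+|V|-k(\omega)}$ to cancel the cluster-count entirely, leaving a factor $2^{|V|} 2^{-|\eta(\omega)|}$ times the product weight $(2p)^{|\eta(\omega)|}(1-2p)^{|E \setminus \eta(\omega)|}$. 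The $2^{-|\eta(\omega)|}$ converts $(2p)^{|\eta(\omega)|}$ into $p^{|\eta(\omega)|}$, so the summand becomes proportional to $p^{|\eta(\omega)|}(1-2p)^{|E\setminus\eta(\omega)|}$ — remarkably free of any topological quantity.

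The remaining step is the combinatorial sum over all $\omega \supseteq F$. Writing $\eta(\omega) = F \cup S$ where $S$ ranges over all subsets of $E \setminus F$, the sum factorizes edge-by-edge: each edge in $F$ contributes $p$, and each edge $e \in E \setminus F$ contributes $(p + (1-2p)) = 1-p$ from summing its open and closed states. This yields $\Pr(\gamma = F) \propto p^{|F|}(1-p)^{|E \setminus F|}$, which is exactly the unnormalized weight in \eqref{eq:res}; normalization is automatic since both sides are probability measures on $\sF$. I expect the main obstacle — really the only subtle point — to be the bookkeeping in the cancellation of the cluster-number and cycle-rank exponents; once one sees that $q=2$ makes $2^{k(\omega)} \cdot 2^{-c(\omega)}$ collapse to a topology-free constant, and that the $2p \to p$ rescaling is what makes the geometric sum over $E \setminus F$ close up to give $1-p$, the result falls out. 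The hypothesis $p \le \tfrac12$ enters precisely to keep $1-2p \ge 0$, so that $2p$ is a legitimate random-cluster parameter in $[0,1]$.
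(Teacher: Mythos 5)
Your proposal is correct and follows essentially the same route as the paper's own proof: conditioning on $\omega$, using the cycle-space count $2^{c(\omega)}$ with $c(\omega)=|\eta(\omega)|-|V|+k(\omega)$ to cancel the cluster factor $2^{k(\omega)}$, and then factorizing the sum over $\omega \supseteq F$ edge-by-edge so that each edge of $E\setminus F$ contributes $p+(1-2p)=1-p$. The only (harmless) addition is your explicit remark on why $p\le\tfrac12$ is needed, which the paper leaves implicit.
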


This recipe for random even subgraphs provides a neat method for
their simulation, provided $p\le \frac12$. One may sample from the
\rc\ measure by 
the method of coupling from the past 
(see \cite{PW} and Section \ref{sec:sampling}), 
and then
sample a uniform random even subgraph by 
either Proposition \ref{P1} or Proposition \ref{P2}.

\begin{proof}
Let $g\subseteq E$ be 
even. By the observations
in Section~\ref{ssec:fin}, with $c(\om)=c(V,\eta(\om))$ denoting the
number of independent cycles in the open subgraph,
$$
\Pr(\g=g\mid \om) = 
\begin{cases} 2^{-c(\om)} &\text{if } g \subseteq \eta(\om),\\
0 &\text{otherwise},
\end{cases}
$$
so that
$$
\Pr(\g=g) = \sum_{\om: g\subseteq \eta(\om)} 2^{-c(\om)}\phi_{2p,2}(\om).
$$
Now $c(\om) = |\eta(\om)| - |V| + k(\om)$, so that, by \eqref{old2.53},
\begin{align*}
\Pr(\g=g) &\propto \sum_{\om: g\subseteq \eta(\om)}
(2p)^{|\eta(\om)|} (1-2p)^{|E\sm\eta(\om)|} 2^{k(\om)}
\frac 1{2^{|\eta(\om)| - |V| +k(\om)}}\\
&\propto \sum_{\om: g \subseteq \eta(\om)}
p^{|\eta(\om)|} (1-2p)^{|E\sm\eta(\om)|}\\
&=[p+(1-2p)]^{|E\sm g|} p^{|g|}\\
&= p^{|g|}(1-p)^{|E\sm g|},\qquad g \subseteq E.
\end{align*}
The claim follows.
\end{proof}

Let $p \in (\frac12,1)$. If $G$ is even, we can sample from $\rho_p$ by
first sampling a subgraph $(V,\tF)$ from $\rho_{1-p}$ and then taking
the complement $(V,E\setminus\tF)$, which has the distribution
$\rho_p$. If $G$ is not even, we adapt this recipe as follows.
For $W \subseteq V$ and $H \subseteq E$, we say that $H$ is
\emph{$W$-even} if each component of $(V,H)$ contains an even number of members of $W$.
Let $W\ne\es$ be the set of vertices of $G$ with odd degree, so that, in particular, $E$ is $W$-even.
Let $\Om^W = \{\om\in\Om: \eta(\om) \hbox{ is $W$-even}\}$. 
For $\om\in\Om^W$, we pick disjoint subsets $P^i=P^i_\om$, 
$i=1,2,\dots,\frac12 |W|$, of $\eta(\om)$, each of which
constitutes an open non-self-intersecting path with distinct endpoints lying in $W$,
and such that
every member of $W$ is the endpoint of exactly one such path.
Write $P_\om =\bigcup_i P^i_\om$.

Let $r=2(1-p)$, and let $\frW$ be the \rc\ measure on $\Om$ with parameters
$r$ and $q=2$ conditional on the event $\Om^W$. We sample from $\frW$ to obtain a subgraph $(V,\eta(\om))$,
from which we select a uniform random even subgraph $(V,\g)$ by the procedure of
the previous section. 

\begin{thm}\label{thm:rcodd}
Let $p \in (\frac12,1)$. The graph $S=(V,E\sm(\g\symdif P_\om))$ is a random even
subgraph of $G$ with parameter $p$.
\end{thm}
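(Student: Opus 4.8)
The plan is to imitate the proof of Theorem~\ref{thm:rceven}: condition on the random-cluster sample $\om$, and compute $\Pr(S=s)$ directly for each even set $s$, showing it is proportional to $p^{|s|}(1-p)^{|E\sm s|}$. Throughout, for an edge-set $H\subseteq E$ write $\pd H$ for the set of vertices of odd degree in $(V,H)$, extending the notation $\pd\om$; note that $\pd(H_1\symdif H_2)=\pd H_1\symdif\pd H_2$, since vertex-degrees add modulo $2$ under symmetric difference. I would first verify that $S$ is always even. As $\g$ is even we have $\pd\g=\es$, while $P_\om$ is a union of edge-disjoint paths whose endpoints are exactly the vertices of $W$, each occurring once; a short degree count then gives $\pd P_\om=W$. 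Hence $\pd(\g\symdif P_\om)=\es\symdif W=W=\pd E$, and since $\deg_{E\sm H}(x)\equiv\deg_E(x)+\deg_H(x)\pmod 2$ for $H\subseteq E$, the complement $E\sm(\g\symdif P_\om)$ has empty odd-degree set, i.e.\ $S$ is even.

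Next I would fix an even set $s$ and put $h=E\sm s$, so that $\pd h=\pd E=W$. Given a sample $\om$ from $\frW$, the event $\{S=s\}$ is exactly $\{\g\symdif P_\om=h\}$, equivalently $\{\g=P_\om\symdif h\}$. Because $P_\om\subseteq\eta(\om)$, this prescribed set is contained in $\eta(\om)$ if and only if $h\subseteq\eta(\om)$, and in that case it is even (its boundary is $W\symdif W=\es$); if $h\not\subseteq\eta(\om)$ no admissible $\g$ exists. Since $\g$ is a uniform even subgraph of $(V,\eta(\om))$, assigning probability $2^{-c(\om)}$ to each of its $2^{c(\om)}$ even subsets, I obtain
\[
\Pr(S=s)=\sum_{\om:\ h\subseteq\eta(\om)}2^{-c(\om)}\,\frW(\om).
\]

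The step I expect to be the crux is to dispose of the conditioning on $\Om^W$ that is built into $\frW$. I claim every $\om$ with $h\subseteq\eta(\om)$ automatically lies in $\Om^W$: by the handshake lemma each component of $(V,h)$ contains an even number of vertices of $W=\pd h$, and since $W\subseteq V(h)$ while adjoining the edges of $\eta(\om)\sm h$ only \emph{merges} $h$-components, every component of $(V,\eta(\om))$ still meets $W$ an even number of times, i.e.\ $\eta(\om)$ is $W$-even. Thus the sum is unconstrained beyond $h\subseteq\eta(\om)$. Substituting $c(\om)=|\eta(\om)|-|V|+k(\om)$ and the random-cluster weight of $\frW$ at $r=2(1-p)$, the factors $2^{k(\om)}$ cancel, and using $r/2=1-p$, $1-r=2p-1$ I get, up to an $s$-independent constant,
\[
\Pr(S=s)\ \propto\ \sum_{\om:\ h\subseteq\eta(\om)}(1-p)^{|\eta(\om)|}(2p-1)^{|E\sm\eta(\om)|}.
\]
Exactly as in Theorem~\ref{thm:rceven}, the edges of $h$ are forced open while each edge of $s$ is summed freely, contributing $(1-p)+(2p-1)=p$ per edge, so $\Pr(S=s)\propto(1-p)^{|h|}p^{|s|}=p^{|s|}(1-p)^{|E\sm s|}$. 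As both $\Pr(S=\cdot)$ and $\rho_p$ are probability measures supported on the even subgraphs, the proportionality constant is $1$, whence $S$ has law $\rho_p$.
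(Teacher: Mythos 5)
Your proof is correct and takes essentially the same route as the paper: the paper establishes Theorem \ref{thm:rcodd} as the special case $A=E$ of Theorem \ref{thm:rcmixed}, and your argument---conditioning on $\om$, observing that $\eta(\om)\supseteq h=E\sm s$ forces $\om\in\Om^W$ so that the conditioning built into $\frW$ may be dropped, then substituting $c(\om)=|\eta(\om)|-|V|+k(\om)$ to cancel the factor $2^{k(\om)}$ and summing out the unconstrained edges---is precisely the paper's computation with $f\symdif A$ written as $h$. Your explicit handshake-lemma verification that every $\om$ with $h\subseteq\eta(\om)$ lies in $\Om^W$, and your check that $S$ is always even, are welcome elaborations of steps the paper asserts tersely.
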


The recipes in Theorems \ref{thm:rceven} and \ref{thm:rcodd} 
can be combined as follows.  
Consider the generalized model
mentioned in Remark \ref{Rpe} with one parameter $p_e \in(0,1)$ for each edge
$e\in E$.
Let $A=\set{e\in E: p_e > \frac12}$. 
Define $r_e = 2p_e$ when $e \notin A$ and $r_e = 2(1-p_e)$  when $e\in A$. 
(Thus $0 < r_e \le 1$.)
Let $W = W_A$ be the set of vertices that are \emph{$A$-odd}, i.e., 
endpoints of an odd number of edges in $A$. 
Sample $\omega$  from the \rc\ measure with parameters
$\br = (r_e: e\in E)$ and $q=2$, conditioned on $\eta(\omega)$ being
$W$-even, let $P_\om$ be as above (for $W=W_A$),
and sample a uniform random even subgraph
$(V,\gamma)$ of $(V,\eta(\omega))$. 
For a discussion of relevant sampling techniques,
see Section \ref{sec:sampling}.

\begin{thm}\label{thm:rcmixed}
The graph $S=(V, \gamma \symdif P_\omega \symdif A)$
is a random even subgraph of $G$ with the distribution
$\rho_{\bp}$ given in \eqref{rpe}.
\end{thm}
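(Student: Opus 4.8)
The plan is to compute $\Pr(S=s)$ directly for each even edge-set $s$, following the factorisation in the proof of Theorem~\ref{thm:rceven} but carrying the set $A$ along through the involution $F\mapsto F\symdif A$. Write $\pd F$ for the set of vertices of odd degree in $(V,F)$, so that $W=W_A=\pd A$. First I would record the ``flipped'' weights $\bp'=(p'_e)$ with $p'_e=p_e$ for $e\notin A$ and $p'_e=1-p_e$ for $e\in A$; then $p'_e\le\frac12$ and $r_e=2p'_e$ for every $e$. Grouping the factors of \eqref{rpe} by membership in $A$ and using $1-p_e=p'_e$, $p_e=1-p'_e$ on $A$, one gets, with $s'=s\symdif A$,
\[
\rho_{\bp}(s)\propto\prod_{e\in s'}p'_e\prod_{e\notin s'}(1-p'_e),
\]
the proportionality constant being independent of $s$. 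Since $s\mapsto s'$ is a bijection of the even sets onto the sets $F'$ with $\pd F'=W$, it suffices to show that $\Pr(S=s)$ is proportional to the same product with an $s$-independent constant.

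Next I would condition on $\om$. Given $\om\in\Om^W$ the paths $P_\om$ are determined and $\g$ is a uniform even subgraph of $(V,\eta(\om))$, so $\Pr(\g=g\mid\om)=2^{-c(\om)}$ when $g\subseteq\eta(\om)$ is even, and $0$ otherwise. The event $S=s$ is exactly $\g=g$ with $g=s'\symdif P_\om$. This $g$ is even, because $\pd g=\pd s'\symdif\pd P_\om=W\symdif W=\es$ (here $\pd s'=\pd A=W$ as $s$ is even); moreover $S$ itself is even, since $\pd S=\pd\g\symdif\pd P_\om\symdif\pd A=\es$. Because $P_\om\subseteq\eta(\om)$, one checks that $g\subseteq\eta(\om)$ if and only if $s'\subseteq\eta(\om)$, so the particular choice of $P_\om$ drops out of the probability. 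Summing over $\om$ gives
\[
\Pr(S=s)=\frac{1}{\phi_{\br,2}(\Om^W)}\sum_{\om\in\Om^W,\ s'\subseteq\eta(\om)}\phi_{\br,2}(\om)\,2^{-c(\om)}.
\]

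The hard part is the apparent conflict between the global conditioning on $\Om^W$ and the edge-by-edge factorisation that drove Theorem~\ref{thm:rceven}: membership in $\Om^W$ couples all the edges at once. The observation that dissolves it is that $\pd s'=W$ makes the conditioning automatic. Indeed, if $s'\subseteq\eta(\om)$ then every $s'$-edge incident to a component $K$ of $(V,\eta(\om))$ lies inside $K$, so $W\cap K$ is precisely the odd-degree vertex set of $(K,s'\cap E(K))$ and hence is even; thus $\om\in\Om^W$ already, and the restriction $\om\in\Om^W$ may be deleted from the sum. Substituting $c(\om)=|\eta(\om)|-|V|+k(\om)$ into \eqref{old2.53} turns the summand into $\prod_e(p'_e)^{\om(e)}(1-r_e)^{1-\om(e)}$, up to a factor $2^{|V|}$ free of $s$, and the now-unconstrained sum over $\om\supseteq s'$ factorises: each $e\in s'$ contributes $p'_e$ and each $e\notin s'$ contributes $p'_e+(1-r_e)=1-p'_e$. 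Hence $\Pr(S=s)\propto\prod_{e\in s'}p'_e\prod_{e\notin s'}(1-p'_e)\propto\rho_{\bp}(s)$ with an $s$-independent constant, and since both are probability measures on the even subgraphs they agree.

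As a consistency check, this argument specialises to Theorem~\ref{thm:rceven} when $A=\es$ (so $W=\es$, $P_\om=\es$, no conditioning, $r_e=2p_e$) and to Theorem~\ref{thm:rcodd} when $A=E$ (so $S=E\sm(\g\symdif P_\om)$). The only ingredient genuinely beyond the proof of Theorem~\ref{thm:rceven} is the boundary identity ``$\pd s'=W\Rightarrow\om\in\Om^W$''; the rest is the same factorisation together with the bookkeeping of the involution $F\mapsto F\symdif A$, which I expect to be the one step needing care.
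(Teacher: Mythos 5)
Your proof is correct and follows essentially the same route as the paper's: condition on $\om$, use $\Pr(\g=g\mid\om)=2^{-c(\om)}$, observe that $\eta(\om)\supseteq s\symdif A$ already forces $\om\in\Om^W$ so the conditioning contributes only a constant, substitute $c(\om)=|\eta(\om)|-|V|+k(\om)$, and factorise the resulting unconstrained sum edge by edge to recover $\rho_{\bp}(s)$. The only differences are cosmetic: your flipped weights $p'_e$ replace the paper's indicator bookkeeping at the end, and you spell out the parity argument (odd-degree vertices of $s\symdif A$ within each open cluster) that the paper states in a single sentence.
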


Note that Theorems \ref{thm:rceven} and \ref{thm:rcodd} are 
special cases of Theorem \ref{thm:rcmixed}, with $A=\es$ and
$A=E$ respectively. 
We find it more illuminating to present the proof of Theorem
\ref{thm:rcodd} in this more general setup.

\begin{proof}[Proof of Theorem \ref{thm:rcmixed}, and thus of 
Theorem	\ref{thm:rcodd}] 

Let  $F = \gamma \symdif P_\omega \symdif A$ be the resulting edge-set,
and note that 
\begin{equation}\label{omfa}
   \eta(\omega) \supseteq  \gamma \symdif P_\omega = F \symdif A.
\end{equation}
Furthermore, if $F$ is even, then $F\symdif A$ has odd degree exactly
at vertices in $W=W_A$; 
hence \eqref{omfa} implies that necessarily  $\omega\in \Omega^W$.

Given an even edge-set $f\subseteq E$, we thus obtain $F=f$ if we first choose
$\omega\in\Omega^W$ with $\eta(\omega) \supseteq  f \symdif A$
and then (having chosen $P_\omega$) select $\gamma$ as the even subgraph
    $f \symdif A \symdif P_\omega$.
Hence, for every 
$\omega\in\Omega^W$ with    $\eta(\omega) \supseteq  f \symdif A$,
we have $\Pr(F=f\mid\om)=2^{-c(\om)}$, and summing over such $\om$ we find
\begin{equation*}
  \begin{split}
\Pr(F=f)
&\propto 
\sum_{\om:\eta(\om)\supseteq f\Delta A} 2^{-c(\om)} 
\phi_{\br,2}(\om)
\\&
\propto
\sum_{\om:\eta(\om)\supseteq f\symdif A} 
2^{-c(\omega)} 2^{k(\omega)} \prod_{e\in E} r_e^{\om(e)}(1-r_e)^{1-\om(e)} 
\\&
\propto
\sum_{\om:\eta(\om)\supseteq f\symdif A} 
2^{-|\eta(\omega)|}\prod_{e\in E} r_e^{\om(e)}(1-r_e)^{1-\om(e)} \\
&=
\sum_{\om:\eta(\om)\supseteq f\symdif A} 
\, \prod_{e\in E} \left(\frac{r_e}2\right)^{\om(e)}(1-r_e)^{1-\om(e)}  
\\&
=
\prod_{e\in f \symdif A}\left(\frac{r_e}{2}\right)  
\prod_{e\notin f \symdif A}\Bigl(1-\frac{r_e}{2} \Bigr).
  \end{split}
\end{equation*}
With $1_e$ denoting the indicator function of the event $\{e \in f\}$, 
this can be
rewritten as
\begin{equation*}
  \begin{split}
\Pr(F=f)
&\propto
\prod_{e\notin A} (r_e/2)^{1_e} (1-r_e/2)^{1-1_e} 
\prod_{e\in A}(r_e/2)^{1-1_e} (1-r_e/2)^{1_e} 
\\&
= \prod_{e\notin A} p_e^{1_e}
(1-p_e)^{1-1_e} \prod_{e\in A} (1-p_e)^{1-1_e} p_e^{1_e} \\
&= \prod_{e\in E} p_e^{1_e}
(1-p_e)^{1-1_e}
\\&
\propto 	\rho_{\bp}(f).	
  \end{split}
\end{equation*}
The claim follows.
\end{proof}

There is a converse to Theorem \ref{thm:rceven}. 
Take a random even subgraph $(V,F)$ 
of $G=(V,E)$ with parameter
$p\le \frac12$.  To each $e \notin F$, we assign an independent random colour,
\emph{blue} with probability $p/(1-p)$ and \emph{red} otherwise. Let $H$
be obtained from $F$ by adding in all blue edges.

\begin{thm}\label{thm:converse}
The graph $(V,H)$ has law $\phi_{2p,2}$.
\end{thm}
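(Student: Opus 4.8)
The plan is to compute the law of $(V,H)$ directly and verify that it is proportional to $\phi_{2p,2}$. Fix a target edge-set $h\subseteq E$. The construction yields $H=h$ precisely when the random even subgraph $F=f$ satisfies $f\subseteq h$, every edge of $h\sm f$ is coloured blue, and every edge of $E\sm h$ is coloured red; note that when $f\subseteq h$ one has $E\sm f=(h\sm f)\cup(E\sm h)$, so these conditions exactly prescribe the colours of all edges in $E\sm f$. Since blue and red occur independently with probabilities $p/(1-p)$ and $(1-2p)/(1-p)$ (both legitimate because $p\le\frac12$), I would first record
$$
\Pr(H=h\mid F=f)=\Bigl(\tfrac{p}{1-p}\Bigr)^{|h\sm f|}\Bigl(\tfrac{1-2p}{1-p}\Bigr)^{|E\sm h|},\qquad f\subseteq h,\ f\in\sF.
$$

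Next I would sum this against $\rho_p(f)=\frac1\ZE\,p^{|f|}(1-p)^{|E\sm f|}$ over even $f\subseteq h$. The key step is a cancellation: writing $|E\sm f|=|E\sm h|+|h\sm f|$ and pairing the factor $(1-p)^{|h\sm f|}$ against $(p/(1-p))^{|h\sm f|}$, the product $\rho_p(f)\,\Pr(H=h\mid F=f)$ collapses to
$$
\frac1\ZE\,p^{|h|}(1-2p)^{|E\sm h|},
$$
which is \emph{independent of $f$}. Here I use $|f|+|h\sm f|=|h|$ together with $(1-p)^{|E\sm h|}\bigl((1-2p)/(1-p)\bigr)^{|E\sm h|}=(1-2p)^{|E\sm h|}$. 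Consequently the sum over $f$ produces only a multiplicative factor, namely the number $N(h)$ of even subgraphs of $(V,h)$.

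Finally I would invoke the linear-algebra count of Section~\ref{ssec:fin}: the even subgraphs of $(V,h)$ form a $\bbzz$-vector space of dimension $c(h)=|h|-|V|+k(h)$ (cf.\ \eqref{c}), whence $N(h)=2^{c(h)}$. Substituting and absorbing the edge-independent constant $2^{-|V|}/\ZE$ gives
$$
\Pr(H=h)\propto p^{|h|}(1-2p)^{|E\sm h|}\,2^{|h|-|V|+k(h)}\propto(2p)^{|h|}(1-2p)^{|E\sm h|}\,2^{k(h)}\propto\phi_{2p,2}(h),
$$
which is exactly the assertion. I do not expect a serious obstacle: the computation mirrors that in the proof of Theorem~\ref{thm:rceven}, and indeed the whole statement is the Bayesian reversal of that coupling — given the uniform even subgraph $\gamma=f$ produced there, each edge of $E\sm f$ is conditionally open in the random-cluster configuration with probability $p/(1-p)$, independently, which is precisely the blue/red rule. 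The only point demanding care is the bookkeeping of the three edge-classes $f$, $h\sm f$, and $E\sm h$, so that the $f$-dependence genuinely cancels and the residual sum is just a count of even subgraphs.
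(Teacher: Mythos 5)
Your proof is correct and takes essentially the same route as the paper's: both arguments sum $\rho_p(f)\,\Pr(H=h\mid F=f)$ over even $f\subseteq h$, observe that the $f$-dependence cancels so the sum reduces to the count $N(h)$ of even subgraphs of $(V,h)$, and then use $N(h)=2^{|h|-|V|+k(h)}$ to recover $(2p)^{|h|}(1-2p)^{|E\sm h|}2^{k(h)}\propto\phi_{2p,2}(h)$. The paper merely writes the identical computation more tersely with proportionality signs, so no changes are needed.
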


\begin{proof}
For $h \subseteq E$,
\begin{align*}
\Pr(H=h) &\propto \sum_{J\subseteq h,\ J\ \rm{even}}
\left(\frac p{1-p}\right)^{|J|} \left(\frac p{1-p}\right)^{|h \sm J|}
\left(\frac{1-2p}{1-p}\right)^{|E\sm h|}\\
&\propto p^{|h|}(1-2p)^{|E\sm h|} N(h),
\end{align*}
where $N(h)$ is the number of even subgraphs of $(V,h)$. As in
the above proof,
$N(h) = 2^{|h| - |V| + k(h)}$ where $k(h)$ is the number
of components of $(V,h)$, and the proof is complete.
\end{proof}

An edge $e$ of
a graph is called \emph{cyclic} if it belongs to some cycle
of the graph.

\begin{cor}\label{thm:cyclic}
For $p\in[0,\frac12]$ and $e \in E$, 
$$
\rho_p(\mbox{\rm $e$ is open}) = \tfrac12 \phi_{2p,2}(\mbox{\rm $e$ is
 a cyclic edge of the open graph}). 
$$
\end{cor}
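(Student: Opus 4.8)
The plan is to read this off directly from the coupling of Theorem~\ref{thm:rceven}. I would sample $\om$ from $\phi_{2p,2}$ and let $(V,\g)$ be a uniform random even subgraph of the open graph $(V,\eta(\om))$; by that theorem $\g$ has law $\rho_p$, so that $\rho_p(e\text{ open})=\Pr(e\in\g)$. The strategy is then to condition on $\om$ and evaluate $\Pr(e\in\g\mid\om)$, and finally to average over $\om$.

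First I would fix $\om$ and work inside the cycle space $\sF(\om)$ of $(V,\eta(\om))$, which by Section~\ref{ssec:fin} is a vector space over $\bbzz$ of dimension $c(\om)$, so $|\sF(\om)|=2^{c(\om)}$. The map $\g\mapsto\g(e)$ is a $\bbzz$-linear functional on $\sF(\om)$, hence either identically $0$ or onto $\bbzz$; in the latter case its two fibres have equal cardinality $2^{c(\om)-1}$, so that exactly half of the even subgraphs contain $e$. Thus $\Pr(e\in\g\mid\om)$ equals $\tfrac12$ when some even subgraph of $(V,\eta(\om))$ contains $e$, and $0$ otherwise.

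Next I would match this dichotomy with cyclicity. Since every even set decomposes into edge-disjoint cycles, an even subgraph of $(V,\eta(\om))$ contains $e$ if and only if some cycle of $(V,\eta(\om))$ does, that is, if and only if $e$ is a cyclic edge of the open graph (which in particular forces $e\in\eta(\om)$). Hence $\Pr(e\in\g\mid\om)=\tfrac12\,1\{e\text{ is cyclic in }(V,\eta(\om))\}$, and summing $\phi_{2p,2}(\om)\Pr(e\in\g\mid\om)$ over $\om$ yields $\rho_p(e\text{ open})=\tfrac12\,\phi_{2p,2}(e\text{ is a cyclic edge of the open graph})$, as required.

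The only real content is the middle step, namely that a cyclic edge lies in exactly half of the even subgraphs; this is the elementary but essential observation that a non-zero $\bbzz$-linear functional on $\sF(\om)$ splits it into two cosets of equal size. Everything else is bookkeeping together with the conditioning supplied by Theorem~\ref{thm:rceven}, so I expect no genuine obstacle beyond getting this linear-algebra point stated cleanly.
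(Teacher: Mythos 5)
Your proposal is correct and follows essentially the same route as the paper: both use the coupling of Theorem~\ref{thm:rceven}, condition on $\om$, and show that $\Pr(e\in\g\mid\om)$ is $\tfrac12$ or $0$ according to whether $e$ is cyclic in the open graph. The only (cosmetic) difference is how the dichotomy is established --- the paper uses the explicit construction of Proposition~\ref{P1}, noting that the parity of the number of selected basis cycles containing $e$ is uniform when $M_e\ge 1$, whereas you observe that $\g\mapsto\g(e)$ is a $\bbzz$-linear functional on the cycle space whose fibres, when it is non-zero, are equal-sized cosets; these are equivalent pieces of linear algebra, and your version has the small merit of making explicit the identification of ``$e$ lies in some even subgraph'' with ``$e$ is cyclic'', which the paper leaves implicit.
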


By summing over $e \in E$, we deduce that the mean number of
open edges under $\rho_p$ is one half of the mean number of
cyclic edges under $\phi_{2p,2}$.

\begin{proof}
Let $\om \in \Om$ and let $\sC$ be a maximal
family of independent cycles of $\om$. 
Let $R=(V,\g)$ be a uniform random
even subgraph of $(V,\eta(\om))$, 
constructed using Proposition \ref{P1} and $\sC$.
For $e\in E$,
let $M_e$ be the number of elements of $\sC$ that include $e$. 
If $M_e \ge 1$, the number of these $M_e$ cycles of $\g$ that are
selected in the construction of $\g$
is equally likely
to be even as odd. Therefore,
$$
\Pr(e\in \g\mid\om)=\begin{cases} \tfrac12 &\mbox{if } M_e \ge 1,\\
0 &\mbox{if } M_e = 0.
\end{cases}
$$ 
The claim follows by Theorem \ref{thm:rceven}.
\end{proof}

\section{Sampling an even subgraph}\label{sec:sampling}

It was remarked earlier that Theorem \ref{thm:rceven} gives
a neat way of sampling an even subgraph of
$G$ according to the probability measure $\eta_p$ with $p \le \frac12$.
Simply use coupling-from-the-past (cftp) to sample from the \rc\ measure
$\phi_{2p,2}$, and then flip a fair coin once for each member of some
maximal independent set of cycles of $G$. 

The theory of cftp was enunciated
in \cite{PW} and has received much attention since. We recall that an
implementation of cftp runs for a random length of time $T$ whose tail
is bounded above by a geometric distribution; it terminates with
probability 1 with an exact sample from the target distribution. 
The \rc\ measure is one of the main examples
treated in \cite{PW}. We do not address questions of complexity and runtime 
in the current
paper, but we remind the reader of
the discussion in \cite{PW} of the relationship between the mean runtime
of cftp to that of the underlying Gibbs sampler.

The situation is slightly
more complicated when $p>\frac12$ and $G$ is not itself even, since the
conditioned \rc\ measure used in Theorems \ref{thm:rcodd} and \ref{thm:rcmixed} is neither
monotone nor anti-monotone. We indicate briefly in this section how to adapt the technique of cftp
to such a situation. 

Let $E$ be a non-empty finite set, and let $\mu$ be a probability measure
on the product space $\Om =\{0,1\}^E$. We call $\mu$ \emph{monotone} (\resp, \emph{anti-monotone}) if
$\mu(1_e\mid \xi_e)$ is non-decreasing (\resp, non-increasing) in $\xi\in\Om$.
Here, $1_e$ is the indicator function that $e$ is open, and
$\xi_e$ is the configuration obtained from $\xi$ on $E \sm \{e\}$.
For $e\in E$, $\psi\in\Om$, and $b=0,1$, we write $\psi_e^b$ for the configuration that agrees with $\psi$ off $e$ 
and takes the value $b$ on $e$. 

It is standard that cftp may be used
to sample from $\mu$ if $\mu$ is monotone (see \cite{PW,DW00}), and it is explained in
\cite{HagN} how to adapt this when $\mu$ is anti-monotone. We propose below a mechanism
that results in an exact sample from $\mu$ without any assumption of (anti-)monotonicity.
This mechanism may be cast in the more general framework of the `bounding chain' of \cite{Hub},
but, unlike in that work, it makes use of the fact that $\Om$ is partially ordered.
A similar approach was proposed in \cite{KM} under the title `dominated
CFTP', in the context of the simulation of point processes.

Write $S_\mu =\{\om\in\Om: \mu(\om)>0\}$, the subset of $\Om$ on which $\mu$ 
is strictly positive, and \emph{assume for simplicity that $S_\mu$
is increasing, and that $1 \in S_\mu$},
where $1$ (\resp, $0$) denotes the configuration of `all $1$'
(\resp, `all $0$'). 
This assumption is valid in the current setting,  
but is not necessary for all that follows.

We start with the usual Gibbs sampler for $\mu$. This is a discrete-time Markov chain $G=(G_n: n\ge 0)$ on
the state space $\Om$ that updates as follows. Suppose $G_n = \xi$. A uniformly
distributed member of $E$ is chosen, $e$ say, and also a random variable $U$ with the uniform
distribution on $[0,1]$. Then $G_{n+1} = \xi'$ where $\xi'(f) = \xi(f)$ for
$f \ne e$, and
$$
\xi'(e) = \begin{cases} 0 &\text{if } U > \mu(1_e\mid \xi_e),\\
1 &\text{if } U \le \mu(1_e\mid \xi_e).
\end{cases}
$$
The transition rule is well defined whenever $\xi_e^1\in S_\mu$. It is convenient
to use the device of \cite{HagN} to extend this definition to configurations not in $S_\mu$,
and to this end we set
\begin{equation}
\mu(1_e \mid \xi_e) = \max\bigl\{\mu(1_e\mid \psi_e): \psi_e \ge \xi_e,\ \psi_e^1\in S_\mu\bigr\}
\label{sc1}
\end{equation}
when $\xi_e^1 \notin S_\mu$. There is a degree of arbitrariness about this definition,
which we follow for consistency with \cite{HagN}.

Let $(e_n,U_n)$ be an independent sequence as above. Let $(A_n, B_n: n \ge 0)$
be a Markov chain with state space $\Om^2$, and $(A_0,B_0) = (0,1)$.
Suppose $(A_n,B_n) = (\xi,\eta)$ where $\xi \le \eta$. We set
$(A_{n+1},B_{n+1})= (\xi',\eta')$ where $\xi'(f)=\xi(f)$, $\eta'(f)=\eta(f)$
for $f \ne e_{n+1}$.  At $e=e_{n+1}$ we set 
\begin{align*}
\xi'(e) &= 1 \q\text{if and only if}\q U_{n+1} \le \alpha,\\
\eta'(e) &= 1 \q\text{if and only if}\q U_{n+1} \le \b,
\end{align*}
where
\begin{equation}
\begin{aligned}
\alpha=\alpha(\xi,\eta) &=\min\bigl\{\mu(1_e \mid \psi_e): \xi_e \le \psi_e \le \eta_e\bigr\},\\
\b=\b(\xi,\eta) &= \max\bigl\{\mu(1_e \mid \psi_e): \xi_e \le \psi_e \le \eta_e\bigr\}.
\end{aligned}
\label{sc2}
\end{equation}
Since $\alpha \le \b$, we have that $\xi'\le\eta'$.

We run the chain $(A,B)$ starting at negative times, in the manner prescribed
by cftp, and let 
$T$ be the coalescence time. More precisely, for $m \ge 0$, let
$(A_k(m),B_k(m): -m\le k \le 0)$ 
denote the chain beginning with $A_{-m}(m) = 0$, $B_{-m}(m)=1$, 
using a fixed random sequence $(e_n,U_n)_{-\infty}^0$ for all $m$, 
and set
$$
T=\min\{m\ge 0:  A_0(m)=B_0(m)\},
$$
so that $A_0(T) = B_0(T)$.

\begin{thm}\label{thm:cftp}
If $S_\mu$ is increasing and $1\in S_\mu$, then $P(T < \oo)=1$, and $A_0(T)$ has law $\mu$.
\end{thm}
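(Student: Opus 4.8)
The plan is to run the now-standard coupling-from-the-past argument, but with the monotone sandwiching supplied by the pair $(A,B)$ playing the role normally played by the monotonicity of a single Gibbs sampler. I would organise the proof around three ingredients: a sandwiching property, an almost-sure coalescence estimate, and the classical CFTP correctness step.

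First I would record the \emph{sandwiching property}. Let $X=(X_n)$ be a single copy of the (extended) Gibbs sampler driven by the same random sequence $(e_n,U_n)$, and suppose $A_n\le X_n\le B_n$. Writing $\psi_e=(X_n)_e$ at the updated edge $e=e_{n+1}$, one has $(A_n)_e\le \psi_e\le (B_n)_e$, so by the very definition \eqref{sc2} of $\alpha,\b$ we get $\alpha(A_n,B_n)\le \mu(1_e\mid \psi_e)\le\b(A_n,B_n)$; comparing the thresholds then forces $A_{n+1}\le X_{n+1}\le B_{n+1}$. Consequently, if $F_{-m}^0$ denotes the random update map from time $-m$ to $0$ built from the fixed noise, then $A_0(m)\le F_{-m}^0(x)\le B_0(m)$ for every $x\in\Om$; in particular, at the coalescence time $T$ the map $F_{-T}^0$ is \emph{constant}, equal to $A_0(T)=B_0(T)$, and the same then holds for every $m\ge T$ via $F_{-m}^0=F_{-T}^0\circ F_{-m}^{-T}$.

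Second, I would prove $\Pr(T<\oo)=1$ by exhibiting a coalescence-forcing event of fixed positive probability. The key point, where the hypotheses enter, is that $\alpha(\xi,\eta)>0$ for every admissible pair: because $S_\mu$ is increasing and $1\in S_\mu$, the extension \eqref{sc1} guarantees $\mu(1_e\mid\psi_e)>0$ for every $\psi_e$ (one may always raise $\psi_e$ to the all-ones configuration off $e$, which lies in $S_\mu$), so the minimum defining $\alpha$ is a minimum of finitely many strictly positive numbers. Let $\d>0$ be the least such value over all pairs and edges. Then in any block of $|E|$ consecutive updates whose chosen edges exhaust $E$ and whose uniforms all satisfy $U\le\d$, every edge is set to $1$ in \emph{both} chains, so both reach the all-ones configuration regardless of the state at the start of the block. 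This event has a fixed probability $p_0>0$ and depends only on the noise in that block; since disjoint blocks are independent and, once coalesced, the two chains agree forever ($\alpha=\b$ when $\xi=\eta$), Borel--Cantelli gives that some block eventually forces coalescence, whence $T<\oo$ almost surely.

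Finally, the correctness step is standard. Let $Z\sim\mu$ be independent of the noise; since $\mu$ is stationary (indeed reversible) for the single-site Gibbs sampler, $F_{-m}^0(Z)\sim\mu$ for each $m$. On the event $\{T\le m\}$ the map $F_{-m}^0$ is the constant $A_0(T)$ by the first step, so $F_{-m}^0(Z)=A_0(T)$ there. For bounded $f$ this yields $\mu(f)=\EE\bigl[f(F_{-m}^0(Z))\bigr]=\EE\bigl[f(A_0(T))\,1_{\{T\le m\}}\bigr]+O\bigl(\Pr(T>m)\bigr)$, and letting $m\to\oo$ (using $\Pr(T>m)\to0$ from the second step) gives $\EE[f(A_0(T))]=\mu(f)$, so $A_0(T)$ has law $\mu$. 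I expect the main obstacle to be the coalescence estimate rather than the correctness step: one must check carefully that the extended transition rule \eqref{sc1} keeps $\alpha$ bounded away from $0$, and that the forcing event genuinely drives \emph{both} bounding chains to a common state irrespective of their starting pair.
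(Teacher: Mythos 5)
Your proposal is correct and takes essentially the same route as the paper: the paper proves only that $P(T<\oo)=1$, and its argument is exactly your forcing-block one (a uniform lower bound $\eta>0$ on the conditional probabilities, guaranteed by \eqref{sc1} together with $S_\mu$ increasing and $1\in S_\mu$, followed by an interval of $|E|$ updates whose uniforms all fall below $\eta$, driving the lower chain to $1$ and hence forcing coalescence, with independence over disjoint intervals giving a geometric tail). The sandwiching and stationarity argument you spell out for the law of $A_0(T)$ is precisely the ``standard exercise'' the paper delegates to \cite{HagN}, so writing it out is a harmless (and correct) elaboration rather than a different method.
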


\begin{proof}
We prove only that $P(T<\oo)=1$. 
The second part is a standard exercise in cftp, and is easily derived
as in \cite[Thm 2.2]{HagN}.
By the definition of $S_\mu$ and \eqref{sc1}, there exists $\eta = \eta(E,\mu) >0$ such that
$\mu(1_e \mid \xi_e) \ge \eta$ for all $e\in E$ and $\xi \in \Om$.
In any given time-interval
of length $|E|$, there is a strictly positive probability that the corresponding
sequence $(e_i,U_i)$ satisfies $E=\{e_i\}$ and $U_i < \eta$ for all $i$. On
this event,
the lower process $A$ takes the value 1 after the interval is past, so that coalescence
has taken place. The corresponding events for distinct time-intervals are independent, whence
the tail of $T$ is no greater than geometric. 
\end{proof}

The above recipe is exactly that of \cite{PW} when $\mu$
is monotone, and that of \cite{HagN} when $\mu$ is anti-monotone.

Let $G=(V,E)$ be a finite graph, and $W \subseteq V$ a non-empty set of vertices
with $|W|$ even.
Let $\br=(r_e: e \in E)$ be a vector of numbers from $(0,1]$, and let $\fbpq$ be the
\rc\ measure on $G$ with edge-parameters $\br$ and $q\ge 1$. We write $\fbpq^W$ for
$\fbpq$ conditioned on the event that the open graph is $W$-even, and note that $\fbpq^W$ is
neither monotone nor anti-monotone.
The event $S_\mu$ is
easily seen to be increasing, and $1 \in S_\mu$.
We may therefore
apply Theorem \ref{thm:cftp} to the measure $\mu=\fbpq^W$. 

Certain natural questions arise over the implementation of the above
algorithm, and we shall not investigate these here. 
First, it is convenient to have a quick way to
calculate $\alpha$ and $\b$ in \eqref{sc2}. A second
problem is to determine the mean runtime
of the algorithm, for which we remind the reader of
the arguments of \cite[Sect.\ 5]{PW}. 

\section{Random even subgraphs of planar lattices}\label{sec:revenL}
In this section, we consider random even subgraphs of the square and hexagonal lattices.
We show that properties of the Ising models on these lattices imply
properties of the random even graphs. In so doing, we shall review
certain known properties of the Ising model, and we include a `modern' proof
of the established fact that the Ising model on the square
lattice has a unique Gibbs state
at the critical point.

Let $G=(V,E)$ be a planar graph embedded in $\RR^2$, 
with dual graph $\Gd=(\Vd,\Ed)$, and write $e_\rd$ for
the dual edge corresponding to the primal edge $e\in E$. [See \cite{G-RC}
for an account of planar duality in the context of the \rc\ model.] Let $p\in(0,\frac12]$
and let $\om \in \Om = \{0,1\}^E$ have law $\phi_{2p,2}$. There is a one--one
correspondence between $\Om$ and $\Om_\rd=\{0,1\}^\Ed$ given by
$\om(e)+\om_\rd(e_\rd)=1$. It is well known that $\om_\rd$ has the law
of the \rc\ model on $\Gd$ with parameters
$(1-2p)/(1-p)$ and $2$, see \cite{G-RC} for example. 
For $A \subseteq V$, the \emph{boundary} of $A$ is given by
$\pd A = \{v\in A : v\sim w \text{ for some } w \notin A\}$. 
[A similar notation was used in a different context in \eqref{new1}.
Both usages are standard, and no confusion will arise in this section.]

For $\om\in\Om$, let $f_0,f_1,\dots,f_c$
be the faces of $(V,\eta(\om))$, with $f_0$ the infinite face. 
These faces are in one--one correspondence with the clusters of
$(\Vd,\eta(\om_\rd))$, which we thus denote by $K_0,K_1,\dots,K_c$, and
the boundaries of the finite faces form a basis of $\sF=\sF(V,\eta(\om))$.
More precisely, the boundary of each finite face $f_i$ consists of an
\lq outer boundary\rq{}  and zero, one or several \lq inner boundaries\rq; 
each of
these parts is a cycle (and two parts may have up to one vertex in common). If
we orient the outer boundary cycle counter-clockwise (positive) and the
inner boundary cycles clockwise (negative), the face will always be on the
left side along the boundaries, and the winding numbers of the 
boundary cycles sum up to 1 at every point inside the face and to 0
outside the face.
It is easy to see that the outer boundary cycles form a maximal family
of independent cycles of $(V,\eta(\om))$, and thus a basis of $\sF$;
another basis is obtained by the complete boundaries $C_i$ of the finite
faces.
We use the latter basis, and select a random subset of the basis by
randomly assigning (by fair coin tosses)
$+$ and $-$ to each cluster in the dual graph
$(\Vd,\eta(\om_\rd))$, or equivalently to each face $f_i$ of 
$(V,\eta(\om))$. We then select the boundaries $C_i$ of the finite faces $f_i$
that have been given a sign different from the sign of the infinite
face $f_0$. 
The union
(modulo 2) of the selected boundaries is by
Proposition~\ref{P1} and Theorem~\ref{thm:rceven} 
a random even subgraph of $G$ with parameter
$p$.
On the other hand, this union is
exactly the dual boundary of the
$+$ clusters 
of $\Gd$, that is, the set of open edges $e \in E$ with the 
property that one endpoint of the corresponding dual edge 
$\ed$ is labelled $+$ and the other is labelled $-$.
[Such an edge $\ed$ is called a $+/-$ edge.]

It is standard that the $+/-$ configuration on $\Gd$
is distributed as the Ising model on $\Gd$ with
parameter $\b$ satisfying
\be\label{eq:beta}
\frac{1-2p}{1-p} = 1 - e^{-2\b}
= \frac{2\tanh \b}{1+\tanh\b}.
\ee
In summary, we have the following.

\begin{thm}
  Let $G$ be a finite planar graph with dual $\Gd$.
A random even subgraph of $G$ with parameter
$p\in(0,\frac12]$ is dual 
to the $+/-$ edges of the Ising model on $\Gd$ with $\b$ satisfying
\eqref{eq:beta}. 
\end{thm}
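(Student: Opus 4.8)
The plan is to assemble the ingredients laid out in the discussion preceding the statement into a single argument whose backbone is Theorem~\ref{thm:rceven}. I would start from a configuration $\om$ sampled from the random-cluster measure $\phi_{2p,2}$ on $G$, whose planar dual $\om_\rd$ (fixed by $\om(e)+\om_\rd(\ed)=1$) carries the random-cluster law on $\Gd$ with parameters $(1-2p)/(1-p)$ and $q=2$. By Theorem~\ref{thm:rceven} it suffices to show that, conditionally on $\om$, the prescribed dual construction produces a \emph{uniform} random even subgraph of $(V,\eta(\om))$, for then the unconditioned law is exactly $\rho_p$. To certify uniformity I would invoke Proposition~\ref{P1} using the planar basis of $\sF(V,\eta(\om))$ given by the boundaries $C_i$ of the finite faces $f_1,\dots,f_c$; a uniform random even subgraph is then $\sum_i \xi_i C_i$ with independent fair coin tosses $\xi_i$.

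The combinatorial heart is to identify this coin-toss construction with the dual $+/-$ boundary. First I would record the one-one correspondence between the faces $f_0,f_1,\dots,f_c$ of $(V,\eta(\om))$ and the clusters $K_0,K_1,\dots,K_c$ of $(\Vd,\eta(\om_\rd))$, with the infinite face $f_0$ matched to the unbounded cluster $K_0$. Assigning independent fair $\pm$ signs to the dual clusters is the same as signing the faces, and selecting $C_i$ exactly when $f_i$ (finite) receives a sign opposite to that of $f_0$ is equivalent to the fair selection of Proposition~\ref{P1}; fixing the sign of $f_0$ only reflects the overall freedom $\xi_i\mapsto 1-\xi_i$ and leaves the distribution unchanged. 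I would then verify edge by edge that $\sum_i\xi_i C_i$ coincides with the set of open primal edges $e$ whose dual $\ed$ joins two oppositely-signed clusters. The point is that an open primal edge $e$ lies between two faces (equivalently, $\ed$ is a closed dual edge joining two dual clusters), so $e$ appears in the mod-2 sum precisely when an odd number of its two adjacent faces is selected, i.e.\ when those faces have different signs; this is exactly the defining property of a dual $+/-$ edge. This step, where the planar embedding and the winding-number bookkeeping already noted are genuinely used, is the one I expect to require the most care.

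The remaining step is distributional: to show that the $\pm$ labelling of the dual clusters, together with $\om_\rd$, realises the Ising model on $\Gd$. This is the Edwards--Sokal coupling at $q=2$: starting from the random-cluster measure on $\Gd$ with parameter $(1-2p)/(1-p)$ and colouring each open cluster with an independent uniform spin yields a spin configuration distributed as $\pi_\b$ on $\Gd$, where $\b$ satisfies $(1-2p)/(1-p)=1-e^{-2\b}$, which is precisely \eqref{eq:beta}. I would invoke this standard coupling and simply match the parameter; the only thing to check is that the joint weight factorises so that the spin marginal is $\pi_\b$ while, conditionally on the spins, the open dual edges are those of the random-cluster model. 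Combining the three ingredients---Theorem~\ref{thm:rceven} for the marginal law $\rho_p$, the geometric identification of the coin-toss even subgraph with the dual $+/-$ boundary, and the Edwards--Sokal coupling for the Ising law on $\Gd$---gives the claim.
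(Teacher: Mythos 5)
Your proposal is correct and follows essentially the same route as the paper: random-cluster planar duality for $\phi_{2p,2}$, the face--cluster correspondence with the basis of finite-face boundaries fed into Proposition~\ref{P1} and Theorem~\ref{thm:rceven}, the identification of the coin-toss sum with the dual $+/-$ edges, and the Edwards--Sokal coupling to recover the Ising law on $\Gd$ at the parameter of \eqref{eq:beta}. Your edge-by-edge verification (including the implicit handling of bridges, whose two adjacent faces coincide, and of edges bordering the infinite face) is in fact slightly more explicit than the paper's own one-line assertion of this identification.
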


Much is known about the Ising model on finite subsets of
two-dimensional lattices, and the above fact permits an analysis of
random even subgraphs of 
their dual lattices.
The situation is much more interesting in the infinite-volume limit, as follows.
Let $G=(V,E)$ be a finite subgraph of $\ZZ^2$, with boundary $\pd V$ when viewed
thus. A \emph{boundary condition} on $\pd V$ is a vector in 
$\Si_{\pd V} =\{-1,+1\}^{\pd V}$.
For given $\eta \in \Si_{\pd V}$, one may consider the 
Ising measure, denoted $\pi_V^\eta$, on $G$ conditioned
to agree with $\eta$ on $\pd V$. We call any subsequential weak limit of the family 
$\{\pi_V^\eta: \eta\in\Si_{\pd V},\, V \subseteq \ZZ^2\}$
a (weak limit) \emph{Gibbs state} for the Ising model.
It turns out that there exists a critical value of $\b$, denoted
$\bc$, such that there is a unique Gibbs state when $\b<\bc$, and
more than one Gibbs state when $\b>\bc$.

Consider the case when $G$ is a box in the square lattice $\ZZ^2$.
That is, $G=G_{m,n}$ is the subgraph of $\ZZ^2$ induced by
the vertex-set $[-m,m]\times[-n,n]$, where
$m,n\in\ZZ_+$ and $[a,b]$ is to be interpreted
as $[a,b]\cap \ZZ$. It is a mild inconvenience that $G_{m,n}$ is
not an even graph, and we adjust the `boundary' to rectify this.
For definiteness, we consider the so-called `wired boundary condition' on $G_{m,n}$,
which is to say that we consider the \rc\ measure on
the graph $\Gw_{m,n}$
 obtained from $G_{m,n}$ by identifying as one the set of vertices 
lying in its boundary $\pd G_{m,n}$.

It has been known since the work of
Onsager 
 that the Ising model on $\ZZ^2$ 
with parameter $\b$ is \emph{critical} when
$e^{2\b} = 1 + \sqrt 2$, or equivalently when the above \rc\ model
on the dual lattice has parameter satisfying
$$
\frac{1-2p}{1-p} = \frac{\sqrt 2}{1+\sqrt 2}
=2-\sqrt2,
$$
that is, $p=\pc$ where
\begin{equation}
  \pc=\frac 1{2+\sqrt 2}=1-\frac1{\sqrt2}.
\end{equation}
The Ising model has been studied extensively in the physics literature,
and physicists have a detailed knowledge of the two-dimensional
case particularly. There is a host of `exact calculations', rigorous
proofs of which can present challenges to mathematicians, see
\cite{Bax,McKW}. 

We shall use the established facts stated in the following theorem.
The continuity of the magnetization at the critical point contributes to
the proof that the re-scaled boundary of a large spin-cluster of the critical
Ising model converges weakly to the Schramm--L\"owner 
curve SLE$_3$, see \cite{Smir,Smi07}.

\begin{thm}
The critical value of the Ising model on the square lattice 
is $\bc = \bsd$ where $\bsd=\frac12\log(1 + \sqrt2)$ is the `self-dual point'. 
The magnetization (and therefore the corresponding \rc\
percolation-probability 
also) is a continuous function of $\b$ on $[0,\oo)$.
\end{thm}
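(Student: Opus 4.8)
The plan is to transfer both assertions to the $q=2$ random-cluster model and to reduce them to the single statement that the percolation probability vanishes at the self-dual point. By the Edwards--Sokal coupling (\cite{G-RC}), the spontaneous magnetization of the Ising model on $\ZZ^2$ equals the wired random-cluster connection probability,
\[
M(\b)=\fpq^1(0\lra\oo),\qq p=1-e^{-2\b}.
\]
Since $\b\mapsto p=1-e^{-2\b}$ is a smooth increasing bijection of $[0,\oo)$ onto $[0,1)$, continuity of $M$ in $\b$ is equivalent to continuity in $p$ of $\theta(p):=\fpq^1(0\lra\oo)$, and the displayed identity shows that the magnetization and the random-cluster percolation-probability are the same function, so the two continuity claims stand or fall together. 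Now $\theta$ is non-decreasing and right-continuous on $[0,1)$ and is continuous at every $p$ other than the critical value $p_{\mathrm c}^{\mathrm{RC}}$; its only possible discontinuity is a jump of height $\theta(p_{\mathrm c}^{\mathrm{RC}})$ there. Hence everything reduces to proving $\theta(p_{\mathrm c}^{\mathrm{RC}})=0$.

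Next I would locate the critical value. As recalled above for the even-subgraph correspondence, planar duality sends the $q=2$ random-cluster measure on $\ZZ^2$ with parameter $p$ to the $q=2$ measure on the dual lattice with parameter $p^*$ given by $p^*/(1-p^*)=2(1-p)/p$. The unique fixed point of this involution is the self-dual value $p_{\mathrm{sd}}$ determined by $p_{\mathrm{sd}}/(1-p_{\mathrm{sd}})=\sqrt2$, that is $p_{\mathrm{sd}}=2-\sqrt2$; under $p=1-e^{-2\b}$ this is exactly $\b=\bsd=\tfrac12\log(1+\sqrt2)$. That the critical value coincides with the self-dual value is Onsager's theorem, which I would quote from \cite{Bax,McKW}; once $\theta(p_{\mathrm{sd}})=0$ is established below it gives the easy inequality $p_{\mathrm c}^{\mathrm{RC}}\ge p_{\mathrm{sd}}$, while the reverse inequality (percolation strictly above the self-dual point) is the part classically supplied by the exact solution. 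This identification yields $\bc=\bsd$.

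The decisive step is to prove $\theta(p_{\mathrm{sd}})=0$ by a Zhang-type duality argument. Suppose instead that $\theta(p_{\mathrm{sd}})>0$, so that the wired measure $\fpq^1$ has, almost surely, an infinite open cluster; by the Burton--Keane argument (which applies, the measure being translation-invariant and of finite energy) this infinite cluster is almost surely unique, and likewise for the dual configuration. At the self-dual point the dual configuration has the same law as the primal. Using the fourfold symmetry of $\ZZ^2$ together with the FKG inequality and a finite-energy gluing argument, I would show that from a sufficiently large box there emanate, with positive probability, infinite open paths in all four coordinate directions, and simultaneously, by self-duality, infinite dual-open paths in all four directions. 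Four infinite primal arms and four infinite dual arms cannot coexist in the plane: the dual arms would separate the primal arms into distinct infinite clusters (or vice versa), contradicting the uniqueness just invoked. Therefore $\theta(p_{\mathrm{sd}})=0$; combined with $p_{\mathrm c}^{\mathrm{RC}}=p_{\mathrm{sd}}$ this gives $\theta(p_{\mathrm c}^{\mathrm{RC}})=0$, whence $M$, and with it the random-cluster percolation-probability, is continuous on $[0,\oo)$.

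I expect the principal obstacle to be the coexistence step in the previous paragraph: carrying out the four-arm construction rigorously under the wired boundary condition, and turning the heuristic \lq arms cannot cross\rq{} into a genuine contradiction with Burton--Keane uniqueness, requires careful planar topology and the correct finite-energy and FKG bookkeeping. Quoting Onsager for the inequality $p_{\mathrm c}^{\mathrm{RC}}\le p_{\mathrm{sd}}$ deliberately sidesteps the second hard ingredient---percolation strictly above the self-dual point---for which no soft two-dimensional argument was available at the time of writing.
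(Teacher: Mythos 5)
Your reduction of both claims to the single statement that the \emph{wired} percolation probability $\theta^1(p_{\mathrm{sd}})$ vanishes is sound (and matches the paper's use of \cite[Thms 5.16, 6.17]{G-RC} for continuity off the critical point and right-continuity), but your proposed proof of that statement contains a fatal error. The step ``at the self-dual point the dual configuration has the same law as the primal'' is false for the measure you are working with: planar duality for the $q=2$ \rc\ model interchanges boundary conditions, so the dual of the \emph{wired} measure at $p_{\mathrm{sd}}$ is the \emph{free} measure at $p_{\mathrm{sd}}$ on the dual lattice, not the wired one. (Equality of free and wired at $p_{\mathrm{sd}}$ is equivalent to uniqueness of the infinite-volume measure at the critical point, which is essentially the statement being proved, so assuming it is circular; for Bernoulli percolation, $q=1$, the issue disappears because free and wired coincide, which is why Zhang's argument works there.) Consequently, under your hypothesis $\theta^1(p_{\mathrm{sd}})>0$ you have no lower bound whatsoever on dual (= free) percolation, the four infinite dual arms cannot be produced, and the Burton--Keane coexistence contradiction never materializes. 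What the Zhang argument genuinely proves is $\theta^0(p_{\mathrm{sd}})=0$ for the \emph{free} measure---precisely the content of \cite[Thm 6.17(a)]{G-RC}, which the paper cites---and it leaves open the scenario $\theta^1(p_{\mathrm{sd}})>0=\theta^0(p_{\mathrm{sd}})$. This scenario is not hypothetical: for the \rc\ model with large $q$ it actually occurs (first-order transition: wired percolates at the self-dual point, free does not, magnetization discontinuous), so no soft duality-plus-uniqueness argument valid uniformly in $q\ge 1$ can yield the theorem; an input special to $q=2$ is unavoidable. Your closing remark misdiagnoses the obstacle as ``planar topology and bookkeeping''; the obstacle is that the hypothesis needed to start the topology is unavailable.

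The paper supplies exactly the missing $q=2$ input, at the two places where your proposal either quotes Onsager or breaks down: (i) it proves $\bc=\bsd$ not from the exact solution but from the exponential decay of $\pi_\b(\s_x\s_y)$ for $\b<\bc$ \cite{ABF,Si80} combined with the final statement of \cite{GG}; and (ii) at $\b=\bc$ it invokes the Aizenman--Higuchi theorem \cite{Aiz80,Hig79} that every two-dimensional Ising Gibbs state is a convex combination of $\pi^+_\bc$ and $\pi^-_\bc$. Granted (ii), the paper argues: if $\pi^+_\bc\ne\pi^0_\bc$, then $\pi^\pm_\bc(\s_x\s_y)$ is bounded away from $0$, while spin-flip symmetry forces $\pi^0_\bc=\frac12\pi^+_\bc+\frac12\pi^-_\bc$, so $\pi^0_\bc(\s_x\s_y)$ is bounded away from $0$; this contradicts $\theta^0(\bsd)=0$, since vanishing free percolation probability forces the free connectivity, hence $\pi^0_\bc(\s_x\s_y)$, to tend to $0$. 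That is the bridge from the free-measure statement (which Zhang-type duality does give) to the wired-measure statement $\theta^1(\bc)=0$ (which you need), and it is precisely what your proposal lacks.
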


We note the corollary that the wired and free \rc\ measures on $\ZZ^2$ 
are identical for $p\in[0,1]$; see \cite[Thms 5.33, 6.17]{G-RC}.

\begin{proof}
These facts are `classical' and have received much
attention, see \cite{McKW} for example; they may be proved as
follows using `modern' arguments. Recall first that the magnetization
equals the percolation probability of the corresponding wired \rc\ model,
and the  two-point correlation function of the Ising model
equals the two-point connectivity function of the \rc\ model (see \cite{G-RC}).
We have that $\bsd\le\bc$, by Theorem 6.17(a) of \cite{G-RC} or otherwise,
and similarly
the \rc\ model with free boundary condition
has percolation-probability 0 whenever either $\b\le\bsd$ or $\b<\bc$. 

By the results of
\cite{ABF,Si80}, the  two-point 
correlation function $\pi_\b(\s_x\s_y)$ of the spins at
$x$ and $y$ decays exponentially as $|x-y|\to\oo$ 
when $\b<\bc$, and it follows by the final statement of \cite{GG} or
otherwise that $\bc=\bsd$. 

The continuity of
the magnetization at $\b\ne\bc$ is standard, see for example
\cite[Thms 5.16, 6.17(b)]{G-RC}. When $\b=\bc$, it suffices to show that
the $\pm$ boundary-condition Gibbs states $\pi^{\pm}_\bc$ and the free
boundary-condition 
Gibbs state $\pi^0_\bc$ satisfy
$\pi^+_\bc=\pi^-_\bc=\pi^0_\bc$. Suppose this does not hold, 
so that $\pi^+_\bc \ne \pi^-_\bc\ne \pi^0_\bc$. By the \rc\
representation or otherwise, 
the two-point correlation functions
$\pi^\pm_\bc(\s_x\s_y)$ are
bounded away from 0
for all pairs $x$, $y$ of vertices. By the main result of \cite{Aiz80,Hig79}
(see also \cite{GeoHig}) and the symmetry of $\pi^0_\bc$, we have that
$\pi^0_\bc = \frac12 \pi^+_\bc + \frac12 \pi^-_\bc$, whence
$\pi^0_\bc(\s_x\s_y)$  
is bounded away from 0. By \cite[Thm 5.17]{G-RC}, this contradicts
the above remark 
that the percolation-probability of the free-boundary condition
\rc\ measure is 0 at $\b=\bsd=\bc$. 
\end{proof}

We consider now the so-called thermodynamic limit of the random even
graph on $\Gw_{m,n}$ as $m,n\to\oo$. 
It is long established 
that the (free boundary condition) Ising measure on $G_{m,n}$ 
converges weakly (in the product
topology) to an infinite-volume limit measure denoted $\pi_\b$. This
may be seen as follows using the theory of the corresponding \rc\ model
on $\ZZ^2$ (see \cite{G-RC}). When $\b\le\bc$, the existence
of the limit follows more or less as discussed above, using the coupling
with the \rc\ measure, and the fact that the percolation probability
of the latter measure is 0 whenever $\b\le\bc$. 
We write $\pi_\b$ for the limit Ising measure as $m,n\to\oo$.

The thermodynamic limit is slightly more subtle when $\b > \bc$, since the 
infinite-volume Ising model
has a multiplicity of Gibbs states in this case. 
The (wired) \rc\ measure on $\Gw_{m,n}$ converges to the wired limit measure.
By the uniqueness of infinite-volume \rc\ 
measures, the limit Ising measure is obtained by allocating random
spins to the clusters of the infinite-volume \rc\ model (see Section
4.6 of \cite{G-RC}).  Once again, we write $\pi_\b$ for the ensuing measure
on $\{-1,+1\}^{\ZZ^2}$, and we note that 
$\pi_\b = \frac12\pi^+_\b + \frac12 \pi^-_\b$ where $\pi^\pm_\b$ denotes
the infinite-volume Ising measure with $\pm$ boundary conditions.

It has been shown in \cite{CNPR} (see also \cite[Cor. 8.4]{GHM})
that there 
exists (with strictly positive $\pi_\b$-probability) an infinite spin-cluster
in the Ising model if and only if $\b>\bc$. More precisely:
\begin{letlist}
\item if $\b \le \bc$, there is $\pi_\b$-probability 1 that
all spin-clusters are finite,
\item if $\b > \bc$, there is $\pi_\b$-probability 1 that
there exists a unique infinite spin-cluster, which is
equally likely to be a $+$ cluster as a $-$ cluster. Furthermore,
by the main theorem of \cite{GKR} or otherwise, for any given finite set $S$ of
vertices, the infinite spin-cluster contains, $\pi_\b$-a.s., 
a cycle containing $S$ in its interior.
\end{letlist}
On passing to the dual graph, one finds that the random even subgraph
of $\Gw_{m,n}$
 with parameter 
$p\in (0,\frac12]$ converges weakly as $m,n \to\oo$ to a probability measure $\rho_p$
  that is concentrated 
on even subgraphs of $\ZZ^2$ and satisfies:
\begin{letlist}
\item[(\ax)] 
if $p\ge \pc$, there is $\rho_p$-probability 1
that all faces of the graph are bounded, 
\item[(\bx)]  
if $p < \pc$, there is $\rho_p$-probability 1
that the graph is the vertex-disjoint union of finite clusters.
\end{letlist}
(Note that \eqref{eq:beta} defines $\beta$ as a decreasing function of
$p$, so the order relations are reversed.) 

We have thus obtained a description of the weak-limit measure 
$\rho_p$ when $p\le \frac12$,
and we note the phase transition at the parameter-value 
$p=\pc$.
When $p>\frac12$, a random even subgraph of $\Gw_{m,n}$ is
the complement of a random even subgraph with parameter $1-p$.
[It is a convenience at this point that $\Gw_{m,n}$ is an even
 graph.]
Hence the weak-limit measure $\rho_p$ exists for all $p\in[0,1]$ and 
gives meaning to the expression
``a random even subgraph on $\ZZ^2$ with parameter
$p$''.  [It is easily verified that 
$\rho_{\frac12}$ 
equals the measure defined in Theorem \ref{Tuniforminfinite} for $\ZZ^2$, and
thus 
describes a uniform random even subgraph of $\ZZ^2$.]
There is a sense in which the random even subgraph on $\ZZ^2$
has two points of phase transition, corresponding to the values
$\pc$ and $1-\pc$.

We consider finally the question of the size of a typical face of
the random even graph on $\ZZ^2$ when $\pc \le p \le \frac12$.
When $p > \pc$, this amounts to asking about the size of a (sub)critical Ising spin-cluster.
Higuchi \cite{Higuchi} has proved an exponential upper bound
for the radius of such a cluster, and this has been extended
by van den Berg \cite{vdB08} to the cluster-volume.
Thus,
the law of the area of a typical face
has an exponential tail.
 
The picture is quite different when the square lattice is 
replaced by the hexagonal lattice $\HH$. Any even subgraph of $\HH$ has 
vertex degrees $0$ and/or $2$, and thus comprises a vertex-disjoint
union of cycles, doubly infinite paths, and isolated vertices.
The (dual) Ising model
inhabits the (Whitney) dual lattice of $\HH$, namely
the triangular lattice $\TT$. Once again there exists
a critical point $\pc=\pc(\TT)<\frac12$
such that the random even subgraph of $\HH$ satisfies
(\ax) and (\bx) above. 
In particular, the random even subgraph has a.s.\ only cycles and
isolated vertices but no infinite paths. 
Recall that site percolation on $\TT$ has critical
value $\frac12$. Therefore, 
for $p=\frac12$,  
the face $F_x$ of the random
even subgraph containing the dual vertex $x$ corresponds to
a \emph{critical} percolation cluster. It follows that its
volume and radius have polynomially decaying tails, and that
the boundary of $F_x$, when conditioned to be increasingly
large, approaches SLE$_6$. 
See \cite{Smi0,Smir} and \cite{CN,We07}. 
The spin-clusters of the Ising model on $\TT$ are `critical'
(in a certain sense described below)
for all $p\in(\pc(\TT),\frac12]$, and this suggests the possibility
that the boundary of $F_x$, when conditioned to be increasingly
large, approaches SLE$_6$ for any such $p$. This is supported
by the belief in the physics community that the so-called universality
class of the spin-clusters of the subcritical Ising model on $\TT$ is
the same as that of critical percolation. 

The `criticality' of
such Ising spin-clusters (mentioned above) may be obtained as follows.
Note first that, since $\b<\bc$, there is a unique Gibbs state $\pi_\b$ for
the Ising model. Therefore, $\pi_\b$ is
invariant under the interchange of spin-values $-1 \lra +1$.
Let $R_n$ be a rhombus of the lattice with side-lengths $n$ and axes
parallel
to the horizontal and one of the diagonal lattice directions, and
consider the event $A_n$ that $R_n$ is traversed from left to right by
a $+$ path 
(i.e., a path $\nu$  satisfying $\s_y= +1$ for
all $y \in \nu$). It
is easily seen that the complement of $A_n$ is 
the event that $R_n$ is crossed from top to bottom
by a $-$ path (see \cite[Lemma 11.21]{G99} for the 
analogous case of bond percolation on the square lattice). Therefore,
\begin{equation}
\pi_\b(A_n) = \tfrac12, \qq 0\le \b < \bc.
\label{eq:45}
\end{equation}
For $x\in \ZZ^2$,  let $S_x$ denote the spin-cluster containing $x$, and
define
$$
\rad(S_x) = \max\{|z-x|: z\in S\},
$$
where $|y|$ is the graph-theoretic distance from $0$ to $y$.
By \eqref{eq:45}, there exists a
vertex $x$ such that $\pi_\b(\rad(S_x) \ge n) \ge (2n)^{-1}$.
By the translation-invariance of $\pi_\b$, 
\be\label{lowerbnd}
\pi_\b\bigl(\rad(S_0)\ge n\bigr) \ge \frac 1{2n}, \qq 0\le \b <\bc,
\ee
where $0$ denotes the origin of the lattice.
The left side of \eqref{lowerbnd} tends to 0
as $n\to\oo$, and
the polynomial lower bound is an indication of the criticality of the 
model.

\section*{Acknowledgements} We acknowledge helpful discussions with
Michael Aizenman 
concerning the geometry of the Ising model with $\b<\bc$ on the square and
triangular lattices. Marek Biskup, Roman Koteck\'y,
and Senya Shlosman
kindly proposed the proof of the continuity at $\bc$ of the Ising
magnetization. 
We thank Jakob Bj\"ornberg, Bertrand Duplantier, Federico Camia,
Hans-Otto Georgii, Ben Graham, Alan Sokal, David Wilson,
and the referee
for their comments on parts of this work.
This research was mainly done during a visit by SJ to the University of
Cambridge, partly funded by Trinity College, and was completed during a visit
by GG to the University of British Columbia.
The authors acknowledge the hospitality
of the Isaac Newton Institute.

\def\PTRF{Probability Theory and Related Fields\ }
\def\CMP{Communications in Mathematical Physics\ }
\def\AoP{Annals of Probability\ }
\def\JPhysA{Journal of Physics A: Mathematical and General\ }
\def\JSP{Journal of Statistical Physics\ }

\bibliographystyle{amsplain}
\bibliography{even1}

\providecommand{\bysame}{\leavevmode\hbox to3em{\hrulefill}\thinspace}
\providecommand{\MR}{\relax\ifhmode\unskip\space\fi MR }
\providecommand{\MRhref}[2]{%
  \href{http://www.ams.org/mathscinet-getitem?mr=#1}{#2}
}
\providecommand{\href}[2]{#2}
\begin{thebibliography}{10}

\bibitem{Aiz80}
M.~Aizenman, \emph{Translation invariance and instability of phase coexistence
  in the two dimensional {I}sing system}, \CMP \textbf{73} (1980), 83--94.

\bibitem{ABF}
M.~Aizenman, D.~J. Barsky, and R.~Fern\'andez, \emph{The phase transition in a
  general class of {I}sing-type models is sharp}, \JSP \textbf{47} (1987),
  343--374.

\bibitem{Bax}
R.~J. Baxter, \emph{Exactly {S}olved {M}odels in {S}tatistical {M}echanics},
  Academic Press, London, 1982.

\bibitem{vdB08}
J.~van~den Berg, \emph{Approximate zero--one laws and sharpness of the
  percolation transition in a class of models including two-dimensional {I}sing
  percolation}, Annals of Probability \textbf{36} (2008), 1880--1903.

\bibitem{CN}
F.~Camia and C.~M. Newman, \emph{Two-dimensional critical percolation: the full
  scaling limit}, \CMP \textbf{238} (2006), 1--38.

\bibitem{CNPR}
A.~Coniglio, C.~R. Nappi, F.~Peruggi, and L.~Russo, \emph{{Percolation and
  phase transitions in the Ising model}}, \CMP \textbf{51} (1976), 315--323.

\bibitem{Dies}
R.~Diestel, \emph{{Graph Theory}}, 3rd ed., Springer, Berlin, 2005.

\bibitem{GKR}
A.~Gandolfi, M.~Keane, and L.~Russo, \emph{{On the uniqueness of the occupied
  cluster in dependent two-dimensional site percolation}}, Annals of
  Probability \textbf{16} (1988), 1147--1157.

\bibitem{GHM}
H.-O. Georgii, O.~H\"aggstr\"om, and C.~Maes, \emph{The random geometry of
  equilibrium phases}, {Phase Transitions and Critical Phenomena} ({C. Domb, J.
  L. Lebowitz}, ed.), vol.~18, Academic {P}ress, London, 2000, pp.~1--142.

\bibitem{GeoHig}
H.-O. Georgii and Y.~Higuchi, \emph{Percolation and number of phases in the
  two-dimensional {I}sing model}, Journal of Mathematical Physics \textbf{41}
  (2000), 1153--1169.

\bibitem{G99}
G.~R. Grimmett, \emph{Percolation}, 2nd ed., Springer, Berlin, 1999.

\bibitem{G-RC}
\bysame, \emph{{The Random-Cluster Model}}, Springer, Berlin, 2006.

\bibitem{GG}
G.~R. Grimmett and B.~T. Graham, \emph{Influence and sharp threshold theorems
  for monotonic measures}, Annals of Probability \textbf{34} (2006),
  1726--1745.

\bibitem{even2}
G.~R. Grimmett and S.~Janson, \emph{Random graphs with forbidden vertex
  degrees}, Random Structures \& Algorithms (2009), arxiv:0712.1270.

\bibitem{HagN}
O.~H\"aggstr\"om and K.~Nelander, \emph{Exact sampling from anti-monotone
  systems}, Statistica Neerlandica \textbf{52} (1998), 360--380.

\bibitem{Hig79}
Y.~Higuchi, \emph{On the absence of non-translation invariant {G}ibbs states
  for the two-dimensional {I}sing model}, Random {F}ields, {E}sztergom
  ({H}ungary) (D.~Sz\'asz J.~Fritz, J. L.~Lebowitz, ed.), vol.~I,
  North-{H}olland, Amsterdam, 1979, pp.~517--534.

\bibitem{Higuchi}
\bysame, \emph{A sharp transition for the two-dimensional {I}sing percolation},
  \PTRF \textbf{97} (1993), 489--514.

\bibitem{Hub}
M.~Huber, \emph{Exact sampling and approximate counting techniques},
  Proceedings of 30th ACM Symposium on the Theory of Computing, vol.~52, 1998,
  pp.~31--40.

\bibitem{KM}
W.~S. Kendall and J.~M{\o}ller, \emph{Perfect simulation using dominating
  processes on ordered state spaces, with applications to locally stable point
  processes}, Advances in Applied Probability \textbf{32} (2000), 844--865.

\bibitem{McKW}
B.~M. McCoy and T.~T. Wu, \emph{The {T}wo-{D}imensional {I}sing {M}odel},
  Harvard University Press, Cambridge, Mass., 1973.

\bibitem{PW}
J.~G. Propp and D.~B. Wilson, \emph{{Exact sampling with coupled Markov chains
  and applications to statistical mechanics}}, Random Structures \& Algorithms
  \textbf{9} (1996), 223--252.

\bibitem{Si80}
B.~Simon, \emph{Correlation inequalities and the decay of correlations in
  ferromagnets}, \CMP \textbf{77} (1980), 111--126.

\bibitem{Smi0}
S.~Smirnov, \emph{Critical percolation in the plane: conformal invariance,
  {C}ardy's formula, scaling limits}, Comptes Rendus des S\'eances de
  l'Acad\'emie des Sciences, S\'erie I, Math\'ematiques \textbf{333} (2001),
  239--244.

\bibitem{Smir}
\bysame, \emph{Towards conformal invariance of {2D} lattice models},
  Proceedings of the {I}nternational {C}ongress of {M}athematicians, {M}adrid,
  2006, vol.~{II}, European {M}athematical {S}ociety, Zurich, 2007,
  pp.~1421--1452.

\bibitem{Smi07}
\bysame, \emph{Conformal invariance in random-cluster models. {I}.
  {H}olomorphic fermions in the {I}sing model}, Annals of Mathematics (2009),
  arxiv:0708.0039.

\bibitem{Sok05}
A.~Sokal, \emph{The multivariate {T}utte polynomial (alias {P}otts model) for
  graphs and matroids}, Surveys in Combinatorics, 2005 (B.~S. Webb, ed.),
  London Mathematical Society Lecture Notes Series, vol. 327, Cambridge
  University Press, 2005, pp.~173--226.

\bibitem{We07}
W.~Werner, \emph{Lectures on two-dimensional critical percolation},  (2007),
  arxiv:0710.0856.

\bibitem{DW00}
D.~B. Wilson, \emph{Layered multishift coupling for use in perfect sampling
  algorithms (with a primer on {CFTP})}, Monte Carlo Methods (N.~Madras, ed.),
  Fields Institute Communications, vol.~26, American Mathematical Society,
  2000, pp.~141--176.

\end{thebibliography}

\end{document}